\begin{document}
\title{Regularity of optimal transportation between spaces with different dimensions\footnote{The author was supported in part by an NSERC postgraduate scholarship.  This work was completed in partial fulfillment of the requirements of a doctoral degree in mathematics at the University of Toronto.}} \author{BRENDAN PASS \footnote{Department of Mathematics, University of Toronto, Toronto, Ontario, Canada, M5S 2E3 bpass@math.utoronto.ca.}}
\maketitle

\begin{abstract} We study the regularity of solutions to an optimal transportation problem where the dimension of the source is larger than that of the target.  We demonstrate that if the target is $c$-convex, then the source has a canonical foliation whose co-dimension is equal to the dimension of the target and the problem reduces to an optimal transportation problem between spaces with equal dimensions.  If the $c$-convexity condition fails, we do not expect regularity for arbitrary smooth marginals, but, in the case where the source is 2-dimensional and the target is 1 dimensional, we identify sufficient conditions on the marginals and cost to ensure that the optimal map is continuous.
\end{abstract}

\section{Introduction}
vvLet $X$ and $Y$ be smooth manifolds of dimensions $m$ and $n$, endowed with Borel probability measures $\mu$ and $\nu$, respectively. We say that a Borel map $F: X \rightarrow Y$ pushes $\mu$ forward to $\nu$ if for all Borel sets $A \subseteq Y$ we have $\nu(A)=\mu(F^{-1}(A))$.  For a given cost function $c: X \times Y \rightarrow \mathbb{R}$, Monge's optimal transportation problem is then to find the Borel map $F$ pushing $\mu$ forward to $\nu$ that minimizes the total transportation cost:
\begin{equation} \label{monge}
\int_{X}c(x,F(x))d\mu
\end{equation}
This can be viewed as a stricter version of the Kantorovich optimal transportation problem, which is to minimize:
\begin{equation*}
\int_{X \times Y}c(x,y)d\gamma
\end{equation*}
among all Borel probability measures $\gamma$ on $X \times Y$ such that the projections of $X \times Y$ onto $X$ and $Y$ push $\gamma$ forward to $\mu$ and $\nu$, respectively.  In fact, the usual method for finding solutions to Monge's problem is to first find the Kantorovich solution; one can then show that, under certain conditions, the solution $\gamma$ is concentrated on the graph of a function $F: X \rightarrow Y$ \cite{lev}\cite{g}\cite{bren}\cite{gm}\cite{Caf}.  These conditions cannot generally hold if $m<n$; in this case, however, there are known conditions under which $\gamma$ will concentrate on the graph of a function $H: Y \rightarrow X$ and so it is preferable to reformulate Monge's problem in terms of maps from $Y$ to $X$.  When minimizing (\ref{monge}), then, it is natural to restrict our attention to the case when $m\geq n$.

Monge's problem has numerous applications and has received a lot of attention from many different authors.  Questions about the existence and uniqueness of optimal maps have been resolved for a wide class of cost functions; much of the present research in optimal transportation aims to understand the structure of these optimizers.  A great deal of progress has been made in this direction, but it has mostly been restricted to the case when $m=n$; problems where $m>n$, on the other hand, have received very little attention.  Aside from being a natural mathematical generalization of the relatively well understood $m=n$ case, however, optimal transportation problems where $m$ and $n$ fail to coincide may have important applications; for example, in economics, optimal transportation type problems arise frequently and there is often no compelling reason to assume that $m=n$.  For a treatment of a related problem in an economic context, see \cite{bas} and \cite{ds}; the connections between these results and the present work will be explored by the present author in a separate paper.

In the $m=n$ case, understanding the regularity, or smoothness, of the optimal map, has grown into an active and exciting area of research in the past few years, due to a major breakthrough by Ma, Trudinger and Wang \cite{mtw}.  They identified a fourth order differential condition on $c$ (called \textbf{(A3S)} in the literature) which implies the smoothness of the optimizer, provided the marginals $\mu$ and $\nu$ are smooth.  Subsequent investigations by Trudinger and Wang \cite{tw1,tw2} revealed that these results actually hold under a slight weakening of this condition, called \textbf{(A3W)}, encompassing earlier results of Caffarelli \cite{c1,c2,c3}, Urbas \cite{u} and Delanoe \cite{d1,d2} when $c$ is the distance squared on either $\mathbb{R}^{n}$ on certain Riemannian manifolds and Wang for another special cost function \cite{wang}.  Loeper \cite{loeper} then verified that \textbf{(A3W)} is in fact necessary for the solution to be continuous for arbitrary smooth marginals $\mu$ and $\nu$.  Loeper also proved that, under \textbf{(A3S)}, the optimizer is Holder continuous even for rougher marginals; this result was subsequently improved by Liu \cite{liu}, who found a sharp Holder exponent.  Since then, many interesting results about the regularity of optimal transportation have been established \cite{km}\cite{km2}\cite{loeper2}\cite{lv}\cite{fr}\cite{frv}\cite{frv2}\cite{fkm}\cite{fkm2}.

This article focuses on adapting these results to the $m>n$ setting.  A serious obstacle arises immediately; the regularity theory of Ma, Trudinger, and Wang requires invertibility of the matrix of mixed second order partials $(\frac{\partial^2 c}{\partial x^i \partial y^j})_{ij}$, and its inverse appears explicitly in their formulations of $\textbf{(A3W)}$ and $\textbf{(A3S)}$.  When $m$ and $n$ fail to coincide, however, $(\frac{\partial^2 c}{\partial x^i \partial y^j})_{ij}$ clearly cannot be invertible.  Alternate formulations of the $\textbf{(A3W)}$ and $\textbf{(A3S)}$ that do not explicitly use this invertibility are known; however, they rely instead on local surjectivity of the map $y \mapsto D_xc(x,y)$, which cannot hold in our setting either.

Nonetheless, there is a certain class of costs for which our problem can easily be solved using the results from the equal dimensional setting.  Suppose 
\begin{equation}\label{quot}
c(x,y)=b(Q(x),y),
\end{equation} 
where $Q:X \rightarrow Z$ is smooth and $Z$ is a smooth manifold of dimension $n$.  In this case, it is not hard to show that the optimal map takes every point in each level set of $Q$ to a common $y$ and studying its regularity amounts to studying an optimal transportation problem on the $n$-dimensional spaces $Z$ and $Y$.  We will show that costs of this form are essentially the only costs on $X \times Y$ for which we can hope for regularity results for arbitrary smooth marginals $\mu$ and $\nu$.  Indeed, for the quadratic cost on Euclidean domains, the regularity theory of Caffarelli requires convexity of the target $Y$ \cite{c1}\cite{c2} and, for general costs, it became apparent in the work of Ma, Trudinger and Wang \cite{mtw} that continuity of the optimizer cannot hold for arbitrary smooth marginals unless $Y$ satisfies an appropriate, generalized notion of convexity.  Due to its dependence on the cost function, this condition is referred to as $c$-convexity;  when $m >n$, we will show that $c$-convexity necessarily fails \textit{unless} the cost function is of the form alluded to above.

In the next section, we will introduce preliminary concepts from the regularity theory of optimal transportation, suitably adapted for general values of $m \geq n$.  In the third section, we prove that $c$-convexity implies the existence of a quotient map $Q$ as discussed above.  We then show that the properties on $Z$ which are necessary for the optimal map to be continuous follow from analogous properties on $X$.

Given the preceding discussion, it is apparent that for cost functions that are not of the special form (\ref{quot}), there are smooth marginals for which the optimal map is discontinuous.  However, as the condition (\ref{quot}) is so restrictive, it is natural to ask about regularity for costs which are not of this form; any result in this direction will require stronger conditions on the marginals than smoothness.  In the final section of our paper, we address this problem when $m=2$ and $n=1$.

\textbf{Acknowledgment:}  The author is pleased to thank Robert McCann and Paul Lee for fruitful discussions during the course of this work.

\section{Conditions and definitions}
Here we develop several definitions and conditions which we will require in the following sections.  We begin with some basic notation.  In what follows, we will assume that $X$ and $Y$ may be smoothly embedded in larger manifolds, in which their closures, $\overline{X}$ and $\overline{Y}$, are compact.  If $c$ is differentiable, we will denote by $D_xc(x,y)$ its differential with respect to $x$.  If $c$ is twice differentiable, $D^2_{xy}c(x,y)$ will denote the map from the tangent space of $Y$ at $y$, $T_yY$, to the cotangent space of $X$ at $x$, $T^*_xX$, defined in local coordinates by
\begin{equation*}
 \frac{\partial}{\partial y^i} \mapsto \frac{\partial ^2 c(x,y)}{\partial y^i \partial x^j}dx^j
\end{equation*}
where summation on $j$ is implicit, in accordance with the Einstein summation convention.  $D_yc(x,y)$ and $D^2_{yx}c(x,y)$ are defined analogously.

A function $u:X \rightarrow \mathbb{R}^n$ is called $c$-concave if $u(x)=\inf_{y \in Y} c(x, y)-u^c(y)$, where $u^c(y):=\inf_{x \in X} c(x, y)-u(x)$.

Next, we introduce the concept of $c$-convexity, which first appeared in Ma, Trudinger and Wang.  
\newtheorem{con}{Definition}[section]
\begin{con}
We say domain $Y$ looks $c$-convex from $x \in X$ if $D_xc(x,Y)=\{D^2_{x}c(x,y) | y \in Y\}$ is a convex subset of $T_xX$.  We say $Y$ is $c$-convex with respect to $X$ if it looks $c$-convex from every $x \in X$.
\end{con}
Our next definition is novel, as it is completely irrelevant when $m=n$.  It will, however, play a vital role in the present setting.
\newtheorem{lin}[con]{Definition}
\begin{lin}
We say domain $Y$ looks $c$-linear from $x \in X$ if $D_xc(x,Y)$ is contained in an $n$-dimensional, linear subspace of $T_xX$.  We say $Y$ is $c$-linear with respect to $X$ if it looks $c$-linear from every $x \in X$.
\end{lin}

When $m=n$, $c$-linearity is automatically satisfied.  When $m>n$, this is no longer true, although $c$-convexity clearly implies $c$-linearity.  

We will also have reason to consider the level set of $\overline{x} \mapsto D_yc(\overline{x},y)$ passing through $x$, $L_x(y):=\{\overline{x} \in X: D_yc(\overline{x},y)=D_yc(x,y)\}$.  

Let us now state the first three regularity conditions introduced by Ma, Trudinger and Wang.\\
\\
\textbf{(A0)}: The function $c \in C^4(\overline{X} \times \overline{Y})$.\\
\textbf{(A1)}: (Twist) For all $x \in X$, the map $y \mapsto D_xc(x,y)$ is injective on $\overline{Y}$.\\
\textbf{(A2)}: (Non-degeneracy) For all $x \in X$ and $y \in Y$, the map $D^{2}_{xy}c(x,y):T_yY \rightarrow T^{*}_xX$ is injective.\\
\newtheorem{rem}[con]{Remark}
\begin{rem}
When $m=n$, a bi-twist hypothesis is required to prove regularity of the optimal map; in addition to \textbf{(A1)}, one must assume $x \mapsto D_yc(x,y)$ is injective on $X$ for all $y \in Y$.  Clearly, such a condition cannot hold if $m>n$; in fact, the non-degeneracy condition and the implicit function theorem imply that the level sets $L_x(y)$ of this mapping are smooth $m-n$ dimensional hypersurfaces.  Later, we will assume that the these level sets are connected.  When $m=n$, non-degeneracy implies that each $L_x(y)$ consists of finitely many isolated points, in which case connectedness implies that it is in fact a singleton, or, equivalently, that $x \mapsto D_yc(x,y)$ is injective.
\end{rem}
The statements of \textbf{(A3W)} and  \textbf{(A3S)}, the most important regularity conditions, require a little more machinery.  For a twisted cost, the mapping $y \mapsto D_xc(x,y)$ is invertible on its range.  We define the $c$-exponential map at $x$, denoted by $c\text{-}exp_x(\cdot)$, to be its inverse; that is, $D_xc(x,c\text{-}exp_x(p))=p$ for all $p \in D_xc(x,Y)$.

\newtheorem{curv}[con]{Definition}
\begin{curv}
 Let $x \in X$ and $y \in Y$.  Choose tangent vectors $\textbf{u}\in T_x X$ and $\textbf{v} \in T_yY$.  Set $\textbf{p}=D_xc(x,y) \in T_x^{*}X$ and $\textbf{q} = (D^2_{xy}c(x,y))\cdot \textbf{v} \in T_x^{*}X$; note that if $Y$ looks $c$-linear at $x$, $\textbf{p}+t\textbf{q} \in D_xc(x,Y)$ for small $t$.  For any smooth curve $\beta(s)$ in $X$ with $\beta(0)=x$ and $\frac{d\beta}{ds}(0)=\textbf{u}$, we define the Ma, Trudinger Wang curvature at $x$ and $y$ in the directions $\textbf{u}$ and $\textbf{v}$ by:
\begin{equation*}
 MTW_{xy}\langle \textbf{u},\textbf{v}\rangle : =-\frac{3}{2}\frac{\partial^4c}{\partial s^2\partial t^2}c(\beta(s),c\text{-}exp_x(\textbf{p}+t\textbf{q}))
\end{equation*}

\end{curv}

We are now ready to state the final conditions of Ma, Trudinger and Wang.  Because they are designed to deal with the general case $ m \geq n$, our formulations look somewhat different from those found in \cite{mtw}; when $m=n$, they reduce to the standard conditions.\\
\\
\textbf{(A3W)}: For all $x \in X$, $y \in Y$, $\textbf{u}\in T_x X$ and $\textbf{v} \in T_yY$ such that $\textbf{u} \cdot D^2_{xy}c(x,y) \cdot \textbf{v}=0$, $MTW_{xy}\langle \textbf{u},\textbf{v}\rangle \geq 0$.
\\
\textbf{(A3S)}: For all $x \in X$, $y \in Y$, $\textbf{u}\in T_x X$ and $\textbf{v} \in T_yY$ such that $\textbf{u} \cdot (D^2_{xy}c(x,y)) \cdot \textbf{v}=0$,  $\textbf{u} \cdot (D^2_{xy}c(x,y)) \neq 0$ and $\textbf{v} \neq 0$ we have  $MTW_{xy}\langle \textbf{u},\textbf{v}\rangle > 0$. \\

If $m=n$, non-degeneracy implies that the condition $\textbf{u}\cdot (D^2_{xy}c(x,y)) \neq 0$ is equivalent to $\textbf{u} \neq 0$.

\section{Regularity of optimal maps}
The following theorem asserts the existence of an optimal map.  It is due to Levin \cite{lev} in the case where $X$ is a bounded domain in $\mathbb{R}^m$ and $\mu$ is absolutely continuous with respect to Lebesgue measure.  The following version can be proved in the same way; see also Brenier \cite{bren}, Gangbo \cite{g}, Gangbo and McCann \cite{gm} and Caffarelli \cite{Caf}.

\newtheorem{exist}{Theorem}[section]

\begin{exist}
 Suppose $c$ is twisted and $\mu(A)=0$ for all Borel sets $A \subseteq X$ of Hausdorff dimension less than or equal to $m-1$.  Then the Monge problem admits a unique solution $F$ of the form $F(x) =c$-exp$(x,Du(x))$ for some $c$-concave function $u$. 
\end{exist}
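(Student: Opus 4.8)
The plan is to follow the now-standard route for deducing Monge solutions from Kantorovich solutions, adapted so that the twist hypothesis is only used in the $x$-variable (which is all that is available when $m > n$). First I would invoke the existence of an optimal Kantorovich plan $\gamma$ by weak-$*$ compactness of the set of transport plans (using compactness of $\overline X \times \overline Y$ and continuity of $c$, which follows from \textbf{(A0)}) together with lower semicontinuity of the cost functional. Next I would record the standard duality statement: there is a $c$-concave potential $u:X \to \mathbb{R}$ (with $c$-transform $u^c$) such that $\gamma$ is concentrated on the $c$-subdifferential $\partial^c u := \{(x,y) : u(x) + u^c(y) = c(x,y)\}$, i.e.\ for $\gamma$-a.e.\ $(x,y)$ the function $\overline x \mapsto c(\overline x, y) - u(\overline x)$ attains its minimum at $\overline x = x$.

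The heart of the argument is to show that this subdifferential is contained in a graph over $X$ off a $\mu$-null set. Since $u$ is $c$-concave and $c \in C^4$, $u$ is locally semiconcave, hence (by Rademacher/Alexandrov) differentiable outside a set $N \subseteq X$ that is contained in a countable union of Lipschitz hypersurfaces; in particular $N$ has Hausdorff dimension at most $m-1$, so $\mu(N) = 0$ by hypothesis. For $x \notin N$, if $(x,y) \in \partial^c u$ then the interior first-order condition for the minimum of $\overline x \mapsto c(\overline x,y) - u(\overline x)$ at $x$ gives $D_x c(x,y) = Du(x)$. By the twist condition \textbf{(A1)}, the map $y \mapsto D_x c(x,y)$ is injective on $\overline Y$, so $y$ is uniquely determined: $y = c\text{-}\mathrm{exp}_x(Du(x))$. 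Thus $\gamma$ is concentrated on the graph of $F(x) := c\text{-}\mathrm{exp}_x(Du(x))$, which forces $\gamma = (\mathrm{id}, F)_\# \mu$, so $F$ pushes $\mu$ to $\nu$ and $F$ is an optimal Monge map. Uniqueness then follows because any optimal Monge map induces an optimal Kantorovich plan, which must likewise be concentrated on a graph determined $\mu$-a.e.\ by $Du$; since $u$ is essentially unique up to a constant on the support (by a connectedness/cyclical-monotonicity argument, or simply because two optimal plans average to an optimal plan, which must again be a graph), the map is unique $\mu$-a.e.

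The step I expect to be the main obstacle — and the one requiring genuine care rather than routine quoting — is the passage from "$\gamma$ is concentrated on $\partial^c u$" to "$\gamma$-a.e.\ point has $x \notin N$ and satisfies the first-order condition." One must verify that $\mu$ giving zero mass to sets of Hausdorff dimension $\le m-1$ is exactly the right hypothesis to kill the non-differentiability set of the semiconcave function $u$; this is where the dimension count $m-1$ enters, and it is also the place where the $m > n$ asymmetry matters, since no analogous statement can be extracted on the $Y$ side. The remaining points — semiconcavity of $c$-concave functions, measurability of $F$, and the reduction $\gamma = (\mathrm{id},F)_\#\mu$ once the graph structure is known — are standard and I would treat them briefly, citing \cite{lev}, \cite{bren}, \cite{g}, \cite{gm}, \cite{Caf}.
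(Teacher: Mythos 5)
Your proposal is correct: the paper does not prove this theorem itself but cites Levin (and Brenier, Gangbo, Gangbo--McCann, Caffarelli), and the argument it alludes to is exactly the standard one you give --- Kantorovich duality producing a $c$-concave potential, semiconcavity and the hypothesis that $\mu$ ignores sets of Hausdorff dimension $\leq m-1$ killing the non-differentiability set, and the twist condition \textbf{(A1)} turning the first-order condition $D_xc(x,y)=Du(x)$ into the graph $F(x)=c\text{-}exp_x(Du(x))$, with uniqueness via the averaging-of-optimal-plans argument. So you have reproduced essentially the same approach as the paper's cited proof.
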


The following example confirms the necessity of $c$-convexity to regularity.  It is due to Ma, Trudinger and Wang in the case where $m=n$; their proof applies to the $m \geq n$ case as well.

\newtheorem{connec}[exist]{Theorem}
\begin{connec}
 Suppose there exists some $x \in X$ such that $Y$ does not look $c$-convex from $x$.  Then there exist smooth measures $\mu$ and $\nu$ for which the optimal map is discontinuous.
\end{connec}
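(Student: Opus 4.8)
The plan is to reduce to the equal-dimensional statement of Ma, Trudinger and Wang by locating the failure of $c$-convexity inside a finite-dimensional affine slice and transplanting their construction there. First I would fix a point $x_0 \in X$ from which $Y$ does not look $c$-convex. By Definition of $c$-convexity this means $D_xc(x_0,Y) \subseteq T^*_{x_0}X$ is not convex, so there are points $y_0, y_1 \in Y$ and a value $\lambda \in (0,1)$ such that $p_\lambda := (1-\lambda)D_xc(x_0,y_0) + \lambda D_xc(x_0,y_1)$ does not lie in $D_xc(x_0,Y)$. Since $D_xc(x_0,Y)$ is compact (as $\overline Y$ is compact and $c \in C^4$), a whole neighbourhood of $p_\lambda$ in the segment, and indeed in a small ball, is missed; this is the ``hole'' that will trap mass.

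Next I would show the failure persists on an $n$-dimensional slice so that the MTW construction applies verbatim. Using the twist condition \textbf{(A1)} and non-degeneracy \textbf{(A2)}, the map $y \mapsto D_xc(x_0,y)$ is an embedding of $\overline Y$ onto the $n$-dimensional (generically curved) hypersurface $D_xc(x_0,Y)$. One then picks a small piece of this hypersurface near $y_0$ and $y_1$ whose image, together with the missed segment through $p_\lambda$, lies in a fixed $n$-dimensional affine subspace $A \subseteq T^*_{x_0}X$ up to second order — more carefully, one should not expect exact containment in $n$ dimensions when $m>n$, so the correct move is to project: let $\pi: T^*_{x_0}X \to A$ be a linear projection onto a generic $n$-dimensional subspace, and observe that $\pi \circ D_xc(x_0,\cdot)$ is still a non-degenerate, twisted cost in the $Y$-variable near the relevant points, while non-convexity of $D_xc(x_0,Y)$ forces non-convexity of its $\pi$-image for a generic choice of $\pi$. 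This realizes the obstruction as one for an honest $n$-dimensional-source, $n$-dimensional-target cost $\tilde c$ obtained by restricting $c$ to a suitable $n$-dimensional submanifold through $x_0$ (e.g. a coordinate slice whose cotangent directions are dual to $A$).

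With the obstruction living in equal dimensions, I would invoke the Ma--Trudinger--Wang counterexample: when a target is not $c$-convert seen from some point, one constructs $\mu$ supported near $x_0$ (in the chosen $n$-dimensional slice, then thickened in the remaining $m-n$ directions by an arbitrary smooth profile) and $\nu$ supported near $y_0$ and $y_1$ but charging no mass near the hole, chosen so that the $c$-concave potential $u$ solving the problem must have a $c$-subdifferential at $x_0$ that contains both a point mapping near $y_0$ and a point mapping near $y_1$ while omitting the segment between them. Since $c$-subdifferentials of $c$-concave functions are, via $c\text{-}\exp$, images of subdifferentials of the induced convex-type potential, connectedness/convexity of that subdifferential is violated, and hence $u$ is not $C^1$ at $x_0$; by Theorem of existence $F(x)=c\text{-}\exp(x,Du(x))$ then jumps, so $F$ is discontinuous. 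The thickening in the extra $m-n$ directions is harmless: one takes $\mu = \mu_{\mathrm{slice}} \otimes (\text{smooth density})$ and notes the cost separates to leading order along the slice so the optimal map still exhibits the jump on a positive-measure set.

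The main obstacle is the second step: arranging that the non-convexity of the full $n$-dimensional-codimension-$(m-n)$ set $D_xc(x_0,Y) \subseteq T^*_{x_0}X$ descends to a genuine non-convexity of a \emph{$n$-dimensional} cross-section or projection to which the MTW machinery literally applies, and checking that the projected/restricted cost still satisfies \textbf{(A1)} and \textbf{(A2)} near the witnessing points. Everything after that is a transcription of \cite{mtw}; the paper even says ``their proof applies,'' so I would expect the write-up to spend its effort exactly on making this reduction precise and then citing \cite{mtw} for the remainder.
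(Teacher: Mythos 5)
There is a genuine gap, and it sits exactly where you predicted the effort would go: the reduction to equal dimensions. Your second step claims that, for a generic projection $\pi$ of $T^*_{x_0}X$ onto an $n$-dimensional subspace (equivalently, for the restriction of $c$ to an $n$-dimensional slice $S\ni x_0$, which replaces $D_xc(x_0,Y)$ by its image in $T^*_{x_0}S$), non-convexity of $D_xc(x_0,Y)$ forces non-convexity of the projected image. That is false, and it fails precisely in the cases that are new when $m>n$. Since $D_xc(x_0,Y)$ is an $n$-dimensional patch sitting in an $m$-dimensional cotangent space, $c$-convexity from $x_0$ already forces it to lie in an $n$-dimensional plane (this is the paper's point that $c$-convexity implies $c$-linearity); so the typical failure of $c$-convexity is curvature of the patch out of every $n$-plane, and projection onto an $n$-plane kills exactly that. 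Concretely, for $m=2$, $n=1$, $D_xc(x_0,Y)$ is a curved arc in the plane, hence non-convex, yet every linear projection to a line has connected, hence convex (interval), image; likewise $D_x(c|_{S\times Y})(x_0,Y)\subset T^*_{x_0}S$ is an interval. The restricted cost $\tilde c$ can then satisfy twist, non-degeneracy, $\tilde c$-convexity of $Y$ and even \textbf{(A3S)}, so the Ma--Trudinger--Wang equal-dimensional counterexample applied to $\tilde c$ produces nothing, while the theorem still asserts a discontinuity. The thickening step is also unsupported: the optimal map for $\mu=\mu_{\mathrm{slice}}\otimes(\text{smooth density})$ is not the slice problem solved fiberwise, the cost does not separate, and nothing in the proposal shows a discontinuity of the slice problem survives in the full problem.

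The fix is to drop the reduction altogether, which is what the paper does: it invokes the proof of Ma, Trudinger and Wang directly, observing that it applies verbatim when $m\geq n$. Their construction lives entirely in $T^*_{x_0}X$ and never uses $m=n$: one picks $y_0,y_1$ and a point $p_\lambda$ on the segment joining $p_i=D_xc(x_0,y_i)$ with a ball around $p_\lambda$ disjoint from the compact set $D_xc(x_0,\overline Y)$, takes $\nu$ smooth with nearly half its mass near each $y_i$ and $\mu$ a smooth bump at $x_0$, and uses that the potential $u$ is semiconcave, that gradients at points of differentiability lie in $D_xc(x,\overline Y)$ (which for $x$ near $x_0$ still avoids a neighbourhood of $p_\lambda$), and that the hole then forces a kink in $u$, hence a jump in $F$. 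Nothing in that argument requires invertibility of $D^2_{xy}c$ or surjectivity of $y\mapsto D_xc(x_0,y)$, which are the only places equal dimensionality enters the regularity theory. Your first paragraph (existence of the hole) and your last (how the hole obstructs differentiability of $u$) are the right ingredients; they should be run directly in the $m$-dimensional cotangent space rather than pushed through a projection or a slice.
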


As $c$-convexity implies $c$-linearity, this example verifies that we cannot hope to develop a regularity theory in the absence of $c$-linearity.  The following lemma demonstrates that, under the $c$-linearity hypothesis, the level sets $L_x(y)$ are the same for each $y$, yielding a canonical foliation of the space $X$.

\newtheorem{linlev}[exist]{Lemma}
\begin{linlev}(i) $Y$ looks $c$-linear from $x \in X$ if and only if $T_x(L_x(y))$ is independent of $y$; that is $T_x(L_x(y_0))=T_x(L_x(y_1))$ for all $y_0,y_1 \in Y$.
\\(ii) If the level sets $L_x(y)$ are all connected, then $Y$ is $c$-linear with respect to $X$ if and only if $L_x(y)$ is independent of $y$ for all $x$
\end{linlev}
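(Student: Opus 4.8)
The plan is to recast both statements in terms of the $n$-dimensional submanifold $S:=D_xc(x,Y)\subseteq T^*_xX$, using a duality, supplied by the non-degeneracy hypothesis \textbf{(A2)}, between the tangent planes of $S$ and those of the level sets $L_x(y)$. First I would record the linear-algebraic core. The maps $D^2_{yx}c(x,y)\colon T_xX\to T^*_yY$ and $D^2_{xy}c(x,y)\colon T_yY\to T^*_xX$ are mutually transpose under the canonical pairings (both compute $c_{x^jy^i}u^jv^i$), so \textbf{(A2)}, which says $D^2_{xy}c(x,y)$ is injective, says equivalently that $D^2_{yx}c(x,y)$ is surjective. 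Hence $L_x(y)$ is a smooth $(m-n)$-submanifold near $x$ with
\[
T_xL_x(y)=\ker D^2_{yx}c(x,y)=\bigl(\operatorname{Im}D^2_{xy}c(x,y)\bigr)^{\circ},
\]
where for $W\subseteq T^*_xX$ we write $W^{\circ}\subseteq T_xX$ for its annihilator. On the other hand, $\Phi_x\colon y\mapsto D_xc(x,y)$ has differential $D^2_{xy}c(x,y)$, which is injective, so $\Phi_x$ is an immersion, $S=\Phi_x(Y)$ is an $n$-dimensional submanifold, and $T_{D_xc(x,y)}S=\operatorname{Im}D^2_{xy}c(x,y)$. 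Combining, $T_xL_x(y)=\bigl(T_{D_xc(x,y)}S\bigr)^{\circ}$; since $W\mapsto W^{\circ}$ is a bijection between the $n$-planes of $T^*_xX$ and the $(m-n)$-planes of $T_xX$, the family $\{T_xL_x(y)\}_{y\in Y}$ is constant in $y$ if and only if the Gauss map $y\mapsto T_{D_xc(x,y)}S$ of $S$ is constant.

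Part (i) would then follow from the classical rigidity fact that a connected immersed submanifold of a vector space has constant Gauss map if and only if it is contained in an affine subspace of its own dimension: for the non-trivial direction, let $V$ be the common tangent plane and compose a parametrization of $S$ with the projection of $T^*_xX$ onto the quotient $T^*_xX/V$ — the differential of the composite vanishes identically, so on the connected domain it is constant and $S$ lies in a single affine translate of $V$. As $Y$ is a domain, hence connected, so is $S=D_xc(x,Y)$, and we conclude: $Y$ looks $c$-linear from $x$ (i.e.\ $D_xc(x,Y)$ lies in an $n$-dimensional affine subspace of $T^*_xX$) if and only if $T_xL_x(y)$ is independent of $y$.

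For (ii), one direction is immediate: if $L_x(y)$ is independent of $y$ then so is its tangent space $T_xL_x(y)$, and (i) gives $c$-linearity at $x$; letting $x$ range over $X$ gives $c$-linearity with respect to $X$. For the converse, assume $Y$ is $c$-linear with respect to $X$. By (i), the subspace $\mathcal D(x):=T_xL_x(y)$ does not depend on $y$, and since $\mathcal D(x)=\ker D^2_{yx}c(x,y_0)$ for any fixed $y_0$, with $D^2_{yx}c(\cdot,y_0)$ of constant rank $n$, this defines a smooth rank-$(m-n)$ distribution $\mathcal D$ on $X$. Because level sets of $\bar x\mapsto D_yc(\bar x,y)$ are pairwise equal or disjoint, for any $\tilde x\in L_{\bar x}(y)$ we have $L_{\tilde x}(y)=L_{\bar x}(y)$, hence $T_{\tilde x}L_{\bar x}(y)=T_{\tilde x}L_{\tilde x}(y)=\mathcal D(\tilde x)$; that is, every $L_{\bar x}(y)$ is an integral manifold of $\mathcal D$. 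A distribution admitting an integral manifold through each of its points is involutive, so by the Frobenius theorem $X$ is foliated by $(m-n)$-dimensional leaves. Now fix $x$ and $y$. The connected set $L_x(y)$ is a full-dimensional integral manifold of $\mathcal D$ containing $x$, so it is contained in the leaf $\Lambda$ through $x$ and is open therein; it is also closed in $X$, being the preimage of a point under the continuous map $D_yc(\cdot,y)$, hence closed in $\Lambda$ as well; since $\Lambda$ is connected and $L_x(y)$ is nonempty, $L_x(y)=\Lambda$. Thus $L_x(y)$ is the leaf of $\mathcal D$ through $x$ — manifestly independent of $y$.

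The step I expect to be the main obstacle is the converse half of (ii): extracting involutivity of $\mathcal D$ from the presence of the integral manifolds $L_{\bar x}(y)$, and in particular the concluding open-and-closed argument, which is exactly where the connectedness hypothesis on the level sets is indispensable — it is what promotes ``$L_x(y)$ lies inside the leaf through $x$'' to ``$L_x(y)$ is that whole leaf''. The remainder is soft, resting only on the transpose relation between $D^2_{xy}c$ and $D^2_{yx}c$ together with the rigidity of submanifolds having constant Gauss map.
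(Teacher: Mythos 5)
Your proposal is correct, and for part (i) it is essentially the paper's argument: identify $T_xL_x(y)$ with the annihilator (the paper says orthogonal complement) of $\operatorname{Im}D^2_{xy}c(x,y)$, observe that $D^2_{xy}c(x,y)$ is the differential of $y\mapsto D_xc(x,y)$, and conclude that constancy of the tangent/range data is equivalent to flatness of the image $D_xc(x,Y)$; you merely spell out the ``constant Gauss map'' rigidity step (projection to the quotient by the common plane, connectedness of $Y$) that the paper leaves implicit, and your reading of ``linear subspace'' as an affine flat is the one the paper's own proof requires. The real divergence is in the converse half of (ii): the paper simply asserts that equality of the tangent spaces $T_x(L_x(y_0))=T_x(L_x(y_1))$ at every $x$ is equivalent to equality of the level sets, with connectedness doing the work, whereas you justify this via a rank-$(m-n)$ distribution, involutivity from the existence of integral manifolds through every point, the Frobenius theorem, and an open-and-closed argument inside a leaf. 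That argument is valid (the level sets are embedded, hence closed in $X$ and closed in the finer leaf topology, and equidimensional integral manifolds are open in the leaf), but it is heavier than necessary: a lighter route to the same conclusion is to fix $y_0,y_1$ and note that along the connected submanifold $L_x(y_1)$ the differential of $\bar x\mapsto D_yc(\bar x,y_0)$ is $D^2_{yx}c(\bar x,y_0)$ restricted to $T_{\bar x}L_{\bar x}(y_1)=T_{\bar x}L_{\bar x}(y_0)=\ker D^2_{yx}c(\bar x,y_0)$, so the map is constant there, giving $L_x(y_1)\subseteq L_x(y_0)$ and, by symmetry, equality --- which is presumably the one-line equivalence the paper has in mind. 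So: same skeleton, with your version buying a global foliation picture at the cost of invoking Frobenius, and the paper's (implicit) version needing only the vanishing-differential-on-a-connected-set argument.
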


\begin{proof}
We first prove (i).  The tangent space to $L_x(y)$ at $x$ is the null space of the map $D^2_{yx}c(x,y):T_xX \mapsto T^*_yY$, which, in turn, is the orthogonal complement of the range of $D^2_{xy}c(x,y):T_yY \mapsto T^*_xX$.  Therefore, $T_x(L_x(y))$ is independent of $y$ if and only if the range of $D^2_{xy}c(x,y)$ is independent of $y$.   But $D^2_{xy}c(x,y)$ is the differential of the map $y \mapsto D_xc(x,y)$ (making the obvious identification between $T_x^*X$ and its tangent space at a point) and so its range is independent of $y$ if and only if the image of this map is linear.

To see (ii), note that (i) implies $Y$ is $c$-linear with respect to $X$ if and only if $T_x(L_x(y_0))=T_x(L_x(y_1))$ for all $x \in X$  and all $y_0,y_1 \in Y$.  But $T_x(L_x(y_0))=T_x(L_x(y_1))$ for all $x$ is equivalent to $L_x(y_0)=L_x(y_1)$ for all $x$; this immediately yields (ii).
\end{proof}

For the remainder of this section, we will assume that $L_x(y)$ is connected and independent of $y$ for all $x$ and we will denote it simply by $L_x$.  In this case, we will demonstrate now that points in the same level set are indistinguishable from an optimal transportation perspective.  The $L_x$'s define a canonical foliation of $X$ and our problem will be reduced to an optimal transportation problem between $Y$ and the space of leaves of this foliation.  More precisely, we define an equivalence relation on $X$ by $x \sim \overline{x}$ if $\overline{x} \in L_x$.  We then define the quotient space $Z=X/\sim$ and the quotient map $Q: X \rightarrow Z$.  Note that, for any fixed $y_0 \in Y$, the map $x \mapsto D_yc(x,y_0) \in T_{y_0}Y$ has the same level sets as $Q$ (namely the $L_x$'s) and is smooth by assumption.  Furthermore, the non-degeneracy condition implies that this map is open and hence a quotient map.  We can therefore identify $Z \approx D_yc(X,y_0)$ with a subset of the cotangent space $T_{y_0}^*Y$.  In particular, $Z$ has a smooth structure, and, if $c$ satisfies \textbf{(A0)}, $Q$ is $C^3$.

Our strategy now will be to show that if $F: X \rightarrow Y$ is the optimal map, then $F$ factors through $Q$; $F=T \circ Q$. As $Q$ is smooth, this will imply that treating the smoothness of $F$ reduces to studying the smoothness of $T$.  To this end, we will show that $T$ itself solves an optimal transportation problem with marginals $\alpha=Q_{\#} \mu$ on $Z$ and $\nu$ on $Y$ relative to the cost function $b(z,y)$ defined uniquely by: 
\begin{eqnarray*}
&D_yb(z,y)=D_yc(x,y), \text{ for } x \in Q^{-1}(z)\\
&b(z,y_0)=0
\end{eqnarray*}
As $Z$ and $Y$ share the same dimension, the regularity theory of Ma, Trudinger and Wang will apply in this context. 

We first obtain a useful formula for the cost function $b$.
\newtheorem{formula}[exist]{Proposition}
\begin{formula}\label{formula}
 For any $z \in Z$, $y\in Y$ and $x \in Q^{-1}(z)$, we have $b(z,y)=c(x,y)-c(x,y_0)$.
\end{formula}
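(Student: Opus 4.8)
The plan is to verify directly that the function $\phi(y):=c(x,y)-c(x,y_0)$, for a fixed $x\in Q^{-1}(z)$, satisfies the two conditions that pin down $b(z,\cdot)$ uniquely, after first checking that $\phi$ does not in fact depend on the choice of representative $x\in Q^{-1}(z)$.

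The first and only substantive step is this well-definedness. Given $x,\overline{x}\in Q^{-1}(z)$, the definition of the equivalence relation says $\overline{x}\in L_x=L_x(y)$ for every $y\in Y$, which means precisely that $D_yc(\overline{x},y)=D_yc(x,y)$ for all $y$. Hence the function $y\mapsto c(x,y)-c(\overline{x},y)$ has identically vanishing $y$-differential and is therefore constant on $Y$ (here one uses connectedness of $Y$). Evaluating that constant at $y_0$ gives $c(x,y)-c(\overline{x},y)=c(x,y_0)-c(\overline{x},y_0)$, i.e. $c(x,y)-c(x,y_0)=c(\overline{x},y)-c(\overline{x},y_0)$, so $\phi$ is independent of the representative.

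It then remains to match the two defining relations. Since $c(x,y_0)$ is constant in $y$, one has $D_y\phi(y)=D_yc(x,y)$, which is exactly the prescribed differential of $b(z,\cdot)$ (and, by the previous paragraph, is consistent for every $x\in Q^{-1}(z)$); and $\phi(y_0)=c(x,y_0)-c(x,y_0)=0$, the normalization. Since two functions on the connected manifold $Y$ with the same differential and the same value at $y_0$ coincide, the asserted uniqueness of $b(z,\cdot)$ yields $b(z,y)=\phi(y)=c(x,y)-c(x,y_0)$.

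I expect the well-definedness step to be the main (indeed the only) obstacle; the rest is a one-line check. It is worth remarking that the argument simultaneously shows the system defining $b$ is consistent in the first place: the prescribed differential $D_yc(x,y)$ is independent of $x\in Q^{-1}(z)$ by the definition of the leaves $L_x$, and it is the genuine differential of the function $c(x,\cdot)$, hence integrable, with the condition $b(z,y_0)=0$ removing the remaining constant.
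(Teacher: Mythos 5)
Your proof is correct and follows essentially the same route as the paper's: check that $c(x,y)-c(x,y_0)$ vanishes at $y=y_0$ and has $y$-differential $D_yc(x,y)=D_yb(z,y)$, so the two functions coincide. Your explicit verification that the right-hand side is independent of the representative $x\in Q^{-1}(z)$ is a worthwhile extra detail (the paper leaves it implicit, though the same argument appears in its proof of Lemma 3.5), but it does not change the approach.
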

\begin{proof}
For $y=y_0$ the result follows immediately from the definition of $h$.  As $D_yb(z,y)=D_yc(x,y)$ for all $y$, the formula holds everywhere.
\end{proof}

Note that this implies $c(x,y)=b(Q(x),y)+c(x,y_0)$, which is equivalent to $b(Q(x),y)$ for optimal transportation purposes.

\newtheorem{ccon}[exist]{Lemma}
\begin{ccon} \label{con}
For any $x_0, x_1 \in L_x$, $\overline{y} \in Y$ and $c$-concave $u$ we have $u(x_0)=c(x_0, \overline{y})-u^{c}(\overline{y})$ if and only if $u(x_1)=c(x_1, \overline{y})-u^{c}(\overline{y})$. 
\end{ccon}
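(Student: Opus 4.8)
The plan is to leverage Proposition~\ref{formula}, which gives the splitting $c(x,y)=b(Q(x),y)+c(x,y_0)$, in order to show that $u(x)-c(x,y_0)$ depends on $x$ only through $Q(x)$. First I would insert this splitting into the definition of $c$-concavity: since $u(x)=\inf_{y\in Y}\big(c(x,y)-u^c(y)\big)=\inf_{y\in Y}\big(b(Q(x),y)+c(x,y_0)-u^c(y)\big)$, and the summand $c(x,y_0)$ does not involve the variable $y$ over which we infimize, it factors out of the infimum. This yields the representation $u(x)=c(x,y_0)+v(Q(x))$, where $v(z):=\inf_{y\in Y}\big(b(z,y)-u^c(y)\big)$ is a well-defined function on the quotient $Z$ (in fact $b$-concave, though this is not needed here).

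With this in hand the lemma is essentially bookkeeping. Fix $x_0,x_1\in L_x$ and $\overline{y}\in Y$. Because $L_x$ is a single leaf of the foliation, $Q(x_0)=Q(x_1)=:z$. Applying the splitting of $c$ once more, the equality $u(x_i)=c(x_i,\overline{y})-u^c(\overline{y})$ becomes $c(x_i,y_0)+v(z)=b(z,\overline{y})+c(x_i,y_0)-u^c(\overline{y})$, and cancelling the common term $c(x_i,y_0)$ reduces it to $v(z)=b(z,\overline{y})-u^c(\overline{y})$, a condition involving neither $x_0$ nor $x_1$ but only $z$, $\overline{y}$, and $u$. Hence it holds for $i=0$ if and only if it holds for $i=1$, which is exactly the assertion.

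The one genuinely substantive ingredient --- and the step I would regard as the heart of the matter --- is the validity of the factorization $u(x)=c(x,y_0)+v(Q(x))$, which relies on Proposition~\ref{formula} and therefore on the standing hypothesis that the level sets $L_x(y)$ are connected and $y$-independent; this is precisely what makes $Q$ and the cost $b$ meaningful. An alternative, more hands-on route avoids introducing $b$ at all: connectedness and $y$-independence of the $L_x(y)$ force $D_yc(x_1,\cdot)\equiv D_yc(x_0,\cdot)$ whenever $x_0\sim x_1$, so integrating along a path in $Y$ gives $c(x_1,\overline{y})-c(x_1,y_0)=c(x_0,\overline{y})-c(x_0,y_0)$; subtracting the two candidate equalities then cancels all cost terms and produces the same equivalence. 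I would present the first version, as it is cleaner and ties directly to the preceding results.
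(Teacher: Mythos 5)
Your proposal is correct, and no circularity arises: Proposition~\ref{formula} precedes Lemma~\ref{con} in the paper and its proof uses only the definition of $b$, so you may legitimately invoke it. Your primary route is, however, organized differently from the paper's. The paper argues directly: since $D_yc(x_0,y)=D_yc(x_1,y)$ for all $y$, the difference $c(x_0,y)-c(x_1,y)$ is constant in $y$, so adding and subtracting $c(x_0,y)$ inside the infimum defining $u(x_1)$ pulls that constant out and transfers the equality from $x_0$ to $x_1$; this is essentially the ``alternative, more hands-on route'' you sketch at the end. Your main argument instead deduces from the splitting $c(x,y)=b(Q(x),y)+c(x,y_0)$ the factorization $u(x)=c(x,y_0)+v(Q(x))$ with $v(z)=\inf_{y\in Y}\bigl(b(z,y)-u^c(y)\bigr)$, after which both equalities collapse to the single $z$-level condition $v(z)=b(z,\overline{y})-u^c(\overline{y})$. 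This buys something: it proves a slightly stronger structural statement (every $c$-concave $u$ factors through $Q$ up to the additive term $c(\cdot,y_0)$), which is exactly the characterization the paper invokes later, without proof, in the corollary on the failure of \textbf{(A3W)}; the paper's computation is more economical in that it needs neither $b$ nor $Q$, only the constancy of $c(x_0,\cdot)-c(x_1,\cdot)$ on the leaf. Both presentations (and the paper's) implicitly use connectedness of $Y$ to pass from equality of $y$-derivatives to a $y$-independent difference, a hypothesis worth flagging but shared by all three arguments.
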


\begin{proof}
 First note that as $D_yc(x_0,y)-D_yc(x_1,y)=0$ for all $y \in Y$, the difference $c(x_0,y)-c(x_1,y)$ is independent of $y$.  Now, suppose $u(x_0)=c(x_0, \overline{y})-u^{c}(\overline{y})$.  Then 
\begin{eqnarray*}
 u(x_1)&=&\inf_{y \in Y} c(x_1, y)-u^c(y) \\
&=&\inf_{y \in Y} \big( c(x_1, y)-c(x_0,y)+c(x_0,y)-u^c(y) \big)\\
&=& c(x_1, \overline{y})-c(x_0,\overline{y})+\inf_{y \in Y} \big( c(x_0,y)-u^c(y) \big)\\
&=& c(x_1, \overline{y})-c(x_0,\overline{y})+u(x_0) \\ 
&=& c(x_1, \overline{y})-u^c(\overline{y}) \\ 
\end{eqnarray*}
The proof of the converse is identical.
\end{proof}

\newtheorem{fac}[exist]{Proposition}
\begin{fac}
 Suppose $c$ is twisted and $\mu$ doesn't charge sets of Hausdorff dimension $m-1$.  Let $F: X \rightarrow Y$ be the optimal map.  Then there exists a map $T: Z \rightarrow Y$ such that $F=T \circ Q$, $\mu$ almost everywhere.  Moreover, $T$ solves the optimal transportation problem on $Z \times Y$ with cost function $b$ and marginals $\alpha$ and $\nu$.
\end{fac}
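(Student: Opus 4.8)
The plan is to prove the two assertions in turn: first, that $F$ is constant along the leaves $L_x$ and hence descends to a map $T$ on $Z$; second, that $T$ is optimal by comparing transportation costs on $X\times Y$ directly, using the decomposition $c(x,y)=b(Q(x),y)+c(x,y_{0})$ from the remark following Proposition~\ref{formula}.

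For the factorization, I would begin from the output of the existence theorem: there is a $\mu$-full set $S\subseteq X$ on which $u$ is differentiable, $F(x)=c$-exp$(x,Du(x))$, and the infimum defining $c$-concavity is attained at $F(x)$, i.e.\ $u(x)=c(x,F(x))-u^{c}(F(x))$. (That the non-differentiability set of $u$ is $\mu$-null is exactly the hypothesis that $\mu$ charges no set of Hausdorff dimension $m-1$, since $u$, being an infimum of uniformly semiconcave functions $x\mapsto c(x,y)-u^{c}(y)$, is semiconcave and so its non-differentiability set is $(m-1)$-rectifiable.) Now take $x_{0},x_{1}\in S$ with $Q(x_{0})=Q(x_{1})$, so that $x_{1}$ lies in the leaf $L_{x_{0}}$. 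Applying Lemma~\ref{con} with $\overline{y}=F(x_{0})$ gives $u(x_{1})=c(x_{1},F(x_{0}))-u^{c}(F(x_{0}))$, so $F(x_{0})$ also attains the infimum at $x_{1}$; since $x\mapsto c(x,F(x_{0}))-u^{c}(F(x_{0}))-u(x)$ is nonnegative with a zero at $x_{1}$ and $u$ is differentiable there, its gradient vanishes, i.e.\ $D_{x}c(x_{1},F(x_{0}))=Du(x_{1})$. By the twist hypothesis \textbf{(A1)} this forces $F(x_{0})=c$-exp$(x_{1},Du(x_{1}))=F(x_{1})$. Thus $F$ is constant on each fibre $S\cap Q^{-1}(z)$; defining $T(z)$ to be this common value on $Q(S)$ and extending $T$ arbitrarily to the $\alpha$-null complement (measurability being handled by a routine measurable selection argument), we obtain $F=T\circ Q$ $\mu$-a.e., and hence $T_{\#}\alpha=T_{\#}(Q_{\#}\mu)=(T\circ Q)_{\#}\mu=F_{\#}\mu=\nu$.

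For optimality, let $T'\colon Z\to Y$ be any map with $T'_{\#}\alpha=\nu$ and put $G:=T'\circ Q$, so $G_{\#}\mu=T'_{\#}(Q_{\#}\mu)=\nu$ and $G$ is admissible for the Monge problem on $X\times Y$. Using $c(x,y)=b(Q(x),y)+c(x,y_{0})$, boundedness of $c$ on the compact set $\overline{X}\times\overline{Y}$, and the change of variables along $Q$, I would write $\int_{X}c(x,G(x))\,d\mu=\int_{Z}b(z,T'(z))\,d\alpha+\int_{X}c(x,y_{0})\,d\mu$, and similarly, using the factorization, $\int_{X}c(x,F(x))\,d\mu=\int_{Z}b(z,T(z))\,d\alpha+\int_{X}c(x,y_{0})\,d\mu$; the last term is independent of the map. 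Optimality of $F$, $\int_{X}c(x,F(x))\,d\mu\le\int_{X}c(x,G(x))\,d\mu$, then reduces after cancelling the constant to $\int_{Z}b(z,T(z))\,d\alpha\le\int_{Z}b(z,T'(z))\,d\alpha$. Since $T'$ was arbitrary, $T$ solves the optimal transportation problem on $Z\times Y$ with cost $b$ and marginals $\alpha$ and $\nu$.

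I expect the main obstacle to be the factorization step, specifically the need to know that the optimal map realizes the $c$-concavity infimum and that $u$ is differentiable, both $\mu$-almost everywhere, so that Lemma~\ref{con} and then the twist condition can be invoked to pin down $F(x_{1})$; these facts are precisely what underlies the existence theorem, so once they are granted, the argument is clean. The optimality step is then essentially bookkeeping, the only point needing (routine) care being the change of variables along $Q$, justified by the boundedness of $c$ on $\overline{X}\times\overline{Y}$.
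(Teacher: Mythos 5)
Your proof is correct and follows essentially the same route as the paper: Lemma~\ref{con} transfers the contact point along each leaf to obtain the factorization $F=T\circ Q$, and the decomposition $c(x,y)=b(Q(x),y)+c(x,y_0)$ from Proposition~\ref{formula} gives optimality of $T$ by cancelling the $\int_X c(x,y_0)\,d\mu$ term. The only difference is cosmetic: where the paper simply cites the well-known fact that for $\mu$-a.e.\ $x$ the contact point is unique, you re-derive it on the fibre via differentiability of $u$ and the twist condition, which is a valid (and slightly more explicit) way to pin down the same conclusion.
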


\begin{proof}
 It is well known that there exists a $c$-concave functions $u(x)$ such that, for $\mu$ almost every $x$, there is a unique $y \in Y$ such that $u(x)=c(x,y)-u^{c}(y)$; in this case, $F(x)=y$.

For $\alpha$ almost every $z \in Z$, Lemma \ref{con} now implies that there is a unique $y \in Y$ such that $u(x)=c(x,y)-u^{c}(y)$ for all $x \in Q^{-1}(z)$; define $T(z)$ to be this $y$.  In then follows immediately that $F= T \circ Q$, $\mu$ almost everywhere, and that $T$ pushes $\alpha$ to $\nu$.

Now, suppose $G:Z \rightarrow Y$ is another map pushing $\alpha$ to $\nu$.  Then $G \circ Q$ pushes $\mu$ to $\nu$ and because of the optimality of $F = Q \circ T$ we have 
\begin{equation}
\int_{X}c(x,T \circ Q(x))d\mu \leq \int_{X}c(x,G \circ Q(x))d\mu.\label{prim}
\end{equation}
Now, using Proposition \ref{formula} we have
\begin{eqnarray*}
 \int_{X}c(x,T \circ Q(x))d\mu &=&\int_{X}b(Q(x),T \circ Q(x))+c(x,y_0)d\mu \\
&=&\int_{Z}b(z,T(z))d\alpha+\int_{X}c(x,y_0)d\mu
\end{eqnarray*}
Similarly, 
\begin{equation*}
 \int_{X}c(x,G \circ Q(x))d\mu = \int_{Z}b(z,G(z))d\alpha+\int_{X}c(x,y_0)d\mu
\end{equation*}
and so (\ref{prim}) becomes
\begin{equation*}
\int_{Z}b(z,T(z))d\alpha \leq \int_{Z}c(z,G(z))d\alpha \\
\end{equation*}
Hence, $T$ is optimal.
\end{proof}

Having established that the optimal map $F$ from $X$ to $Y$ factors through $Z$ via the quotient $Q$ and the optimal map $T$ from $Z$ to $Y$, we will now study how the regularity conditions \textbf{(A1)-(A3S}) for $c$ translate to $b$.

Proposition \ref{formula} also allows us to understand the derivatives of $b$ with respect to $z$.  Pick a point $z_0 \in Z$ and select $x_0 \in Q^{-1}(z_0)$.  Now, let $S$ be an $m$-dimensional surface passing though $x_0$ which intersects $L_{x_0}$ transversely.  As the null space of the map $D^2_{yx}c(x,y_0): T_xX \rightarrow T_y^*Y$ is precisely  $T_xL_x$ for any $y$, it is invertible when restricted to $T_xS$; by the inverse function theorem, the map $D_yc(\cdot,y_0)$ restricts to a local diffeomorphism on $S$.  For all $z$ near $z_0$, there is a unique $x \in S \cap Q^{-1}(z)$ and we have $b(z,y)=c(x,y)-c(x,y_0)$; we can now identify $D_zb(z,y) \approx D_xc|_{S \times Y}(x,y)-D_xc|_{S \times Y}(x,y_0)$ and $D_{zy}^2b(z,y) \approx D_{xy}^2c|_{S \times Y}(x,y)$.  We use this observation to prove the following result.

\newtheorem{prop}[exist]{Theorem}
\begin{prop} (i) If $c$ is twisted, $b$ is bi-twisted.\\
(ii) If $c$ is non-degenerate, $b$ is non-degenerate.\\
(iii)If $Y$ is $c$-convex, it is also $b$-convex.  
\end{prop}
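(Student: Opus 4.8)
The plan is to deduce all three parts from one linear-algebraic observation, feeding off the identifications of $D_zb$ and $D^2_{zy}b$ with restrictions of $D_xc$ and $D^2_{xy}c$ to a transverse section that were recorded in the discussion preceding the statement (and which themselves rest on Proposition \ref{formula}). Fix $z\in Z$, a representative $x\in Q^{-1}(z)$, and a section $S$ through $x$ meeting the leaf $L_x$ transversally, and write $W_x\subseteq T^*_xX$ for the $n$-dimensional linear subspace supplied by $c$-linearity. By non-degeneracy the common range of $D^2_{xy}c(x,y)$ — which is independent of $y$, by the lemma characterizing $c$-linearity through the level sets $L_x(y)$ — is $n$-dimensional, hence equals $W_x$; and since the whole image $D_xc(x,Y)$ lies in $W_x$, so does each covector $D_xc(x,y)$. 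The one substantial step is then to record that $T_xL_x$ is precisely the annihilator $W_x^{\mathrm{ann}}$ of $W_x$ in $T_xX$ — this is the computation already carried out in the proof of that lemma — so that transversality gives $T_xX=T_xS\oplus T_xL_x$, with $T_xL_x$ killed by every element of $W_x$. It follows that a covector $\xi\in W_x$ vanishing on $T_xS$ vanishes on all of $T_xX$, so the restriction map $W_x\to T^*_xS$ is injective; since $\dim W_x=n=\dim T^*_xS$ it is an isomorphism, which I will call $R_x$. In these terms the recorded identifications read $D_zb(z,y)\approx R_x\big(D_xc(x,y)-D_xc(x,y_0)\big)$ and $D^2_{zy}b(z,y)\,v\approx R_x\big(D^2_{xy}c(x,y)\,v\big)$.

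Granting $R_x$, the three assertions are short. For (ii): if $D^2_{zy}b(z,y)\,v=0$ then $R_x\big(D^2_{xy}c(x,y)\,v\big)=0$, so $D^2_{xy}c(x,y)\,v=0$, so $v=0$ by non-degeneracy of $c$. For the twist half of (i): if $D_zb(z,y_1)=D_zb(z,y_2)$, then $D_xc(x,y_1)$ and $D_xc(x,y_2)$ are elements of $W_x$ with the same restriction to $T_xS$, hence equal, hence $y_1=y_2$ by twistedness of $c$. For the remaining ($z$-variable) injectivity I would argue straight from the defining relation $D_yb(z,y)=D_yc(x,y)$ for $x\in Q^{-1}(z)$: if $D_yb(z_1,y)=D_yb(z_2,y)$ and $x_i\in Q^{-1}(z_i)$, then $D_yc(x_1,y)=D_yc(x_2,y)$, so $x_2\in L_{x_1}(y)=L_{x_1}$, whence $z_1=Q(x_1)=Q(x_2)=z_2$. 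For (iii): under the identification, $D_zb(z,Y)$ is the image under the linear isomorphism $R_x$ of $D_xc(x,Y)-D_xc(x,y_0)$, which is a translate of $D_xc(x,Y)$ and hence convex whenever $Y$ looks $c$-convex from $x$; as linear images of convex sets are convex and $x\in Q^{-1}(z)$ was arbitrary, $Y$ is $b$-convex.

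The main obstacle — really the only place anything happens — is the linear-algebra step: verifying $T_xX=T_xS\oplus T_xL_x$ from transversality and then using $T_xL_x=W_x^{\mathrm{ann}}$ to conclude that a covector in $W_x$ annihilating $T_xS$ is zero. Once $R_x$ is built, (i)–(iii) are formal. The only other thing to watch is keeping the identifications $T_zZ\cong T_xS$ and $T^*_zZ\cong T^*_xS$, both induced by the local diffeomorphism $Q|_S$ (equivalently $D_yc(\cdot,y_0)|_S$), used consistently across the three parts, together with the remark that the sets appearing in (iii) are defined intrinsically on $Z$ and so do not depend on the auxiliary choice of $S$.
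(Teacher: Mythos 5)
Your proposal is correct and follows essentially the same route as the paper: the $z$-injectivity straight from the definition of $b$, the $y$-twist and non-degeneracy from the identification of $D_zb$ and $D^2_{zy}b$ with restrictions of $D_xc$ and $D^2_{xy}c$ to a transverse section, and $b$-convexity because the (translated) image $D_xc(x,Y)$ maps to $D_zb(z,Y)$ under a linear map of convex-set-preserving type. Your construction of the isomorphism $R_x$ on the $c$-linear subspace $W_x$ simply makes explicit the linear algebra the paper leaves implicit in the phrase ``follows from the identification'' and in its projection argument.
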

\begin{proof}
 The injectivity of $z \mapsto D_yb(z,y)$ follows immediately from the the definition of $b$.  Injectivity of $y \mapsto D_zb(z,y)$ and non-degeneracy follow from the preceding identification.  

Note that transversality implies $T_x^*X =T_x^*L_x \oplus T_x^*S$.  Our local identification between $Z$ and $S$ identifies the projection of the range $D_xc(x,Y)$ onto $T_x^*S$ with $D_zb(z,Y)$.  As the projection of a convex set is convex, the $b$-convexity of $Y$ now follows from its $c$-convexity.

\end{proof}

\newtheorem{a3w}[exist]{Theorem}
\begin{a3w}
 The following are equivalent:
\begin{enumerate}
 \item $b$ satisfies \textbf{(A3W)}.
 \item $c$ satisfies \textbf{(A3W)}.
 \item $c$ satisfies \textbf{(A3W)} when restricted to any smooth surface $S \subseteq X$ of dimension $m$ which is transverse to each $L_x$ that it intersects.
\end{enumerate}

\end{a3w}

\begin{proof} The equivalence of (1) and (3) follow immediately from our identification.  Clearly, (2) implies (3); to see that (3) implies (2) it suffices to show $MTW_{xy}\langle \textbf{u},\textbf{v}\rangle = 0$ when $\textbf{u} \in T_xL_x $, as $MTW_{xy}$ is linear in $\textbf{u}$.  Choosing a curve $\beta(s) \in L_x$ such that $\beta(0)=x$ and $\frac{d\beta}{ds}(0)=\textbf{u}$ and $\textbf{p},\textbf{q}$ as in the definition, we have 
\begin{equation*}
\frac{d\beta}{ds}(s) \in T_{\beta(s)}L_{\beta(s)}=\text{null}\big(D^2_{xy}c(\beta(s),c\text{-}exp_x(\textbf{p}+t\textbf{q}))\big).
\end{equation*}   
for all $s$ and $t$, yielding
\begin{equation*} 
  \frac{d^2}{dsdt}c(\beta(s),c\text{-}exp_x(\textbf{p}+t\textbf{q}))=\frac{d\beta}{ds}\cdot D^2_{xy}c\big(\beta(s),c\text{-}exp_x(\textbf{p}+t\textbf{q})\big) \cdot \frac{d(c\text{-}exp(\textbf{p}+t\textbf{q}))}{dt}=0
 \end{equation*}
Hence, $MTW_{xy}\langle \textbf{u},\textbf{v}\rangle = 0$
\end{proof}

\newtheorem{a3s}[exist]{Theorem}
\begin{a3s}
 The following are equivalent:
\begin{enumerate}
 \item $b$ satisfies \textbf{(A3S)}.
 \item $c$ satisfies \textbf{(A3S)}.
 \item $c$ satisfies \textbf{(A3S)} when restricted to any smooth surface $S \subseteq X$ of dimension $m$ which is transverse to each $L_x$ that it intersects.
\end{enumerate}
\end{a3s}
\begin{proof}
The equivalence follows immediately from the identification, after observing that the $v\cdot(D^2_{xy}c(x,y)) \neq 0$ condition in the definition of \textbf{(A3S)} excludes the non-transverse directions.
\end{proof}

Various regularity results for $T$ (and therefore $F$) now follow from the regularity results of Ma, Trudinger and Wang \cite{mtw}, Loeper \cite{loeper} and Liu \cite{liu}.  Note, however, that these results all require certain regularity hypotheses on the marginals; to apply them in the present context, we must check these conditions on $\alpha$, rather than $\mu$.  A brief discussion on whether the relevant regularity conditions on $\mu$ translate to $\alpha$ therefore seems in order.

First, suppose $X$ is a bounded domain in $\mathbb{R}^{n}$ and $\mu =f(x)dx$ is absolutely continuous with respect to $m$-dimensional Lebesgue measure.  Then $\alpha$ is absolutely continuous with respect to $n$-dimensional
Lebesgue measure with density $h(z)$ given by the coarea formula:
\begin{equation*}\label{coarea}
h(z):=\int_{Q^{-1}(z)}\frac{f(x)}{JQ(x)}dH^{m-n}(x)
\end{equation*}
where $JQ$ is the Jacobian of the map $Q$, restricted to the orthogonal complement of $T_xL_x$.
\newtheorem{lp}[exist]{Lemma}
\begin{lp}
 Suppose $f \in L^p(X)$ (with respect to Lebesgue measure on $X$) for some $p \in [1,\infty]$.  Then $h \in L^p(Z)$.
\end{lp}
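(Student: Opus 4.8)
The plan is to read the density $h$ straight off the coarea formula, to note that $f\mapsto h$ is a linear, positivity-preserving operator, and to reduce the $L^p$ bound to the coarea formula together with a single application of Jensen's inequality. Write
\[
M(z):=\int_{Q^{-1}(z)}\frac{1}{JQ(x)}\,dH^{m-n}(x),
\]
so that $h(z)$ is the integral of $f$ against the measure $d\sigma_z:=(JQ(x))^{-1}\,dH^{m-n}(x)$ carried by the fibre $Q^{-1}(z)$, and $\sigma_z$ has total mass $M(z)$ (which is in fact strictly positive, the fibre being a nonempty $(m-n)$-dimensional submanifold). The only nontrivial ingredient will be that $M$ is bounded on $Z$; I will isolate the claim $\|M\|_{L^\infty(Z)}<\infty$ and deal with it last, since it is the one place the standing compactness hypotheses are really used.

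Granting $\|M\|_{L^\infty(Z)}<\infty$, the argument for $p\in[1,\infty)$ is short. Jensen's inequality for the convex function $t\mapsto t^p$ and the probability measure $\sigma_z/M(z)$ gives, for each $z$,
\[
h(z)^p=\Big(\int_{Q^{-1}(z)}f\,d\sigma_z\Big)^{p}\le M(z)^{p-1}\int_{Q^{-1}(z)}f(x)^p\,d\sigma_z=M(z)^{p-1}\int_{Q^{-1}(z)}\frac{f(x)^p}{JQ(x)}\,dH^{m-n}(x).
\]
Integrating in $z$, using $M(z)^{p-1}\le\|M\|_{L^\infty(Z)}^{p-1}$, and then applying the coarea formula to the function $x\mapsto f(x)^p/JQ(x)$ (whose product with $JQ$ is $f^p$) yields
\[
\int_Z h(z)^p\,dz\le\|M\|_{L^\infty(Z)}^{p-1}\int_Z\int_{Q^{-1}(z)}\frac{f^p}{JQ}\,dH^{m-n}\,dz=\|M\|_{L^\infty(Z)}^{p-1}\int_X f(x)^p\,dx,
\]
that is, $\|h\|_{L^p(Z)}\le\|M\|_{L^\infty(Z)}^{1-1/p}\|f\|_{L^p(X)}$. (For $p=1$ this is just the mass identity $\|h\|_{L^1}=\|f\|_{L^1}$ and needs no bound on $M$.) For $p=\infty$ one estimates directly from the definition of $h$: $0\le h(z)\le\|f\|_{L^\infty(X)}M(z)\le\|M\|_{L^\infty(Z)}\|f\|_{L^\infty(X)}$. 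Alternatively, having the two endpoint bounds, the range $1<p<\infty$ also follows by Riesz--Thorin interpolation, as $f\mapsto h$ is linear.

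It remains to prove $\|M\|_{L^\infty(Z)}<\infty$, and I expect this to be the main obstacle. It suffices to bound $1/JQ$ on $X$ and to bound $H^{m-n}(Q^{-1}(z))$ uniformly in $z$, after which $\|M\|_{L^\infty(Z)}\le\|1/JQ\|_{L^\infty(X)}\cdot\sup_z H^{m-n}(Q^{-1}(z))$. The first bound is immediate: under \textbf{(A0)} the map $Q$, identified with $x\mapsto D_yc(x,y_0)$, is $C^3$ on the compact set $\overline X$, and non-degeneracy keeps its coarea Jacobian $JQ$ continuous and strictly positive there, hence bounded below by a positive constant. The second bound is where the compactness assumptions enter: the $Q$-fibres are properly embedded $(m-n)$-submanifolds with closures in the compact set $\overline X$, and, $Q$ extending to a submersion of $\overline X$ which is proper by compactness of $\overline X$, Ehresmann's theorem realises $Q$ as a locally trivial fibration with mutually diffeomorphic compact fibres depending continuously on $z$; hence $z\mapsto H^{m-n}(\overline{Q^{-1}(z)})$ is continuous on the compact base $\overline Z$, so bounded, which gives the required uniform bound on $H^{m-n}(Q^{-1}(z))$. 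The one genuinely delicate point is verifying the hypotheses of Ehresmann's theorem near $\partial\overline X$; it is exactly here that compactness of $\overline X$ and $\overline Y$ is used. Combining the two bounds gives $\|M\|_{L^\infty(Z)}<\infty$ and completes the proof.
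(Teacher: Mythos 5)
Your core estimate is exactly the paper's: Jensen's inequality applied fibrewise, a uniform positive lower bound on $JQ$, a uniform upper bound on the fibre mass, and then the coarea formula to reassemble an integral over $X$. The only cosmetic differences are that you normalise by the weighted fibre mass $M(z)=\int_{Q^{-1}(z)}(JQ)^{-1}dH^{m-n}$ where the paper normalises by $C(z)=H^{m-n}(Q^{-1}(z))$ and uses $JQ\geq K$ separately, and that you record the endpoints $p=1$ and $p=\infty$ explicitly; none of this changes the substance.

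The one place you go beyond the paper is also the one place your argument is wrong. The paper simply takes the global bounds ($JQ\geq K>0$ on $X$ and $C(z)\leq C$) as given, whereas you try to deduce $\sup_z H^{m-n}(Q^{-1}(z))<\infty$ from Ehresmann's theorem, asserting that $Q$ is a locally trivial fibration with compact fibres whose volume depends continuously on $z$. That continuity claim fails in precisely this setting: the fibres are level sets inside a bounded domain and in general meet $\partial X$, so the fibration structure degenerates at the boundary and $z\mapsto H^{m-n}(Q^{-1}(z))$ can jump. The paper's own example following this lemma (the domain bounded above by the graph of $\phi$, with $c(x,y)=x_2y$, so that $Q(x)=x_2$) is a counterexample: the fibre length drops from $2$ for $z<0$ to $1-\phi^{-1}(z)\to 1$ as $z\downarrow 0$, and that discontinuity of $h$ is the very point of the example. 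What the proof needs is only \emph{boundedness} of the fibre measure (and positivity of $JQ$ up to $\overline{X}$), which should be stated as a standing hypothesis or argued by other means (e.g.\ area bounds for level sets of a $C^1$ function with gradient bounded away from zero on a compact set), not derived from a continuity statement that is false here. With that correction, your proof coincides with the paper's.
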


\begin{proof}
 We have $h^p(z)=(\int_{Q^{-1}(z)}\frac{f(x)}{JQ(x)}dH^{m-n}(x))^p$.  Normalizing and applying Jensen's inequality yields:
\begin{eqnarray*}
 \frac{h^p(z)}{C^p(z)} &\leq& \int_{Q^{-1}(z)}\frac{f^p(x)}{(JQ(x))^pC(z)}dH^{m-n}(x) \\
& \leq & \int_{Q^{-1}(z)}\frac{f^p(x)}{JQ(x)C(z)K^{p-1}}dH^{m-n}(x) 
\end{eqnarray*}
where $C(z)$ is the $(m-n)$-dimensional Hausdorff measure of $Q^{-1}(z)$ and $K>0$ is a global lower bound on $JQ(x)$.  Letting $C$ be a global upper bound on $C(z)$ and integrating over $z$ implies:

\begin{eqnarray*}
 \int h^p(z)dz & \leq & \int \int_{Q^{-1}(z)}\frac{f^p(x)C^{p-1}(z)}{JQ(x)^pK^{p-1}}dH^{m-n}(x) dz \\
& \leq & \frac{C^{p-1}}{K^{p-1}}\int \int_{Q^{-1}(z)}\frac{f^p(x)}{JQ(x)^p}dH^{m-n}(x) dz \\
&=& \frac{C^{p-1}}{K^{p-1}}\int f^p(x) dx < \infty
\end{eqnarray*}
where we have again used the coarea formula in the last step.
\end{proof}
Let us note, however, that an analogous result does not hold for the weaker condition introduced by Loeper \cite{loeper}, which requires that for all $x \in X$ and $\epsilon >0$ 
\begin{equation*}
 \mu(B_{\epsilon}(x)) \leq K \epsilon^{n(1-\frac{1}{p})}
\end{equation*}
for some $p > n$ and $K>0$.  Indeed, if $m-n \geq n$, we can take $\mu$ to be $(m-n)$-dimensional Hausdorff measure on a single level set $L_x$.  Then $\mu$ will satisfy the above condition for any $p$, but $\alpha$ will consist of a single Dirac mass. 

The preceding lemma allows use to immediately translate the regularity results of Loeper and Liu to the present setting.

\newtheorem{cont}[exist]{Corollary}
\begin{cont}
 Suppose that $Y$ is $c$-convex with connected level sets $L_x(y)$ for all $x \in X$ and $y \in Y$, and that \textbf{(A0)}, \textbf{(A1)}, \textbf{(A2)} and \textbf{(A3S)} hold.  Suppose that $f \in L^p(X)$ for some $p > \frac{n+1}{2}$.  Then the optimal map is Holder continuous with Holder exponent $\frac{\beta(n+1)}{2n^2+\beta(n-1)}$, where $\beta=1-\frac{n+1}{2p}$.
\end{cont}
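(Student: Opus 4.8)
The plan is to use the factorization of the optimal map established earlier to turn the problem into an equal-dimensional one on $Z\times Y$ with the cost $b$, and then to quote the sharp H\"older estimate of Liu \cite{liu} (the refinement, under \textbf{(A3S)}, of Loeper's continuity theorem \cite{loeper}). Since $c$-convexity implies $c$-linearity and the $L_x(y)$ are assumed connected, the foliation lemma gives that $L_x(y)$ is independent of $y$; call it $L_x$ and form the quotient $Q\colon X\to Z$ as above. Because $f\in L^p(X)$ with $p\ge 1$, the marginal $\mu=f\,dx$ is absolutely continuous and hence does not charge sets of Hausdorff dimension $m-1$, so the factorization proposition applies: the optimal map satisfies $F=T\circ Q$ $\mu$-a.e., with $Q$ of class $C^3$ and $T\colon Z\to Y$ the optimal map for the cost $b$ and the marginals $\alpha=Q_\#\mu$ and $\nu$. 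As $\overline X$ is compact, $Q$ is globally Lipschitz, so any H\"older bound on $T$ transfers to $F=T\circ Q$ with the same exponent; it therefore suffices to prove that $T\in C^{0,\theta}(Z)$ with $\theta=\frac{\beta(n+1)}{2n^2+\beta(n-1)}$, $\beta=1-\frac{n+1}{2p}$.

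Next I would check that the equal-dimensional problem $(Z,Y,b,\alpha,\nu)$ meets the hypotheses of Liu's theorem. By the theorems comparing the regularity conditions of $c$ and $b$: \textbf{(A1)}, \textbf{(A2)} and the $c$-convexity of $Y$ yield that $b$ is bi-twisted and non-degenerate and that $Y$ is $b$-convex, while \textbf{(A3S)} for $c$ yields \textbf{(A3S)} for $b$; the needed smoothness of $b$ comes from \textbf{(A0)} together with the local identification of $Z$ with a transversal surface $S\subseteq X$ (with $Z$ carrying the smooth structure that makes $Q$ of class $C^3$), and $Z$ is compact since $\overline X$ is. Since $\dim Z=\dim Y=n$, this is exactly a problem of the kind treated in \cite{mtw,loeper,liu}. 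The $L^p$-estimate for the push-forward density shows $\alpha=h\,dz$ with $h\in L^p(Z)$, and the hypothesis $p>\frac{n+1}{2}$ passes over verbatim. Applying Liu's H\"older estimate to $T$ (together with the accompanying hypothesis on $\nu$) gives $T\in C^{0,\theta}(Z)$ with the claimed exponent, and by the previous paragraph $F$ inherits the same H\"older regularity.

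I expect the main obstacle to be purely bookkeeping on the regularity of $b$: passing to $Z$ through the $C^3$ map $Q$ costs a derivative, so a priori $b\in C^3(\overline Z\times\overline Y)$ rather than $C^4$, which is marginally weaker than \textbf{(A0)}. I would dispose of this either by observing that the loss is absorbed by the $C^3$-smooth change of variables on the transversal $S$ (the fourth-order MTW curvature is computed along the $c$-exponential map, whose relevant derivatives survive the reduction), or, more simply, by noting that the estimates of \cite{loeper,liu} hold under regularity assumptions on the cost weaker than \textbf{(A0)}, for which $b\in C^3$ is enough. A secondary, minor point is that Liu's theorem is usually phrased on bounded Euclidean domains: here one works in the coordinates given by the embedding of $\overline Y$ and by the identification $Z\approx D_yc(X,y_0)\subseteq T^*_{y_0}Y$, with $b$-convexity of $Y$ supplying the geometric hypothesis on the target and compactness of $\overline X,\overline Y$ the boundedness of the domains.
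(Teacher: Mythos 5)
Your proposal follows essentially the same route as the paper: the corollary there is deduced, without further argument, by combining the factorization $F=T\circ Q$, the theorems transferring twist, non-degeneracy, $c$-convexity and \textbf{(A3S)} from $c$ to $b$, and the $L^p$ lemma for the quotient density, and then invoking Liu's H\"older estimate on the equal-dimensional problem $(Z,Y,b,\alpha,\nu)$. Your additional bookkeeping (the Lipschitz composition with $Q$ and the remark on the $C^3$ versus $C^4$ regularity of $b$) is sound and simply makes explicit what the paper leaves implicit.
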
 

The higher regularity results of Ma, Trudinger and Wang require $C^{2}$ smoothness of the density $h$.  As the following example demonstrates, however, smoothness of $f$ does not even imply continuity of $h$.

\newtheorem{nonsmooth}[exist]{Example}
\begin{nonsmooth}
Let 
\begin{equation*}
X =\{x=(x_1,x_2): -1<x_1<1, -1<x_2<\phi(x_1)  \} \subseteq \mathbb{R}^2 
\end{equation*}
where $\phi:(-1,1) \rightarrow (-1,1)$ is a $C^{\infty}$ function such that $\phi(x_1)=0$ for all $-1<x_1<0$, $\phi(1)=1$ and $\phi$ is strictly increasing on $(0,1)$.  Let $Y =(0,1) \subseteq \mathbb{R}$ and $c(x,y)=x_2y$.    Then $Y$ is $c$-convex and $c$ satisfies \textbf{(A0)}-\textbf{(A3S)}.  The level sets $L_x$ are simply the curves $\{x:x_2 =c\}$ for constant values of $c \in (-1,1)$ and $Z=(-1,1)$.  Set $f(x) = k$, where $k$ is a constant chosen so that $\mu$ has total mass 1.  The density $h$ is then easy to compute; it is simply the length of the line segment $Q^{-1}(z)$.  For $z<0$, $h(z)=2k$; however, for $z>0$, $h(z)=k(1-\phi^{-1}(z)) <k$. \footnote{It should be noted that the while the boundary of $X$ is not smooth here, this is not the reason for the discontinuity in $h$; the corners of the boundary can be mollified and the density will still be discontinuous at $0$.}
\end{nonsmooth}

On the other hand, we should note that is possible for $\alpha$ to be smooth even when $\mu$ is singular.  This will be the case if, for example, $\mu$ is $n$-dimensional Hausdorff measure concentrated on some smooth $n$-dimensional surface $S$ which intersects the $L_x$'s transversely.

Finally, we exploit Loeper's counterexample, which shows that, when $m=n$ and \textbf{(A3W)} fails, there are smooth
densities for which the optimal map is not continuous.  

\newtheorem{noncont}[exist]{Corollary}
\begin{noncont}
 Suppose that $Y$ is $c$-convex and that the level sets $L_x(y)$ are connected for all $x \in X$ and $y \in Y$.  Assume \textbf{(A0)}, \textbf{(A1)}, and \textbf{(A2)} hold but \textbf{(A3W)} fails.  Then there are smooth marginals $\mu$ on $X$ and $\nu$ on $Y$ such that the optimal map is discontinuous.
\end{noncont}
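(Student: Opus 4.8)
The plan is to push Loeper's equal-dimensional counterexample through the factorization developed in this section. Since $Y$ is $c$-convex it is in particular $c$-linear, and since the level sets $L_x(y)$ are assumed connected, the lemma above shows that $L_x:=L_x(y)$ is independent of $y$; hence the entire apparatus of this section applies. In particular we obtain the quotient $Q:X\to Z$ with $\dim Z=\dim Y=n$ and the cost $b$ on $Z\times Y$, and the theorems above on how the regularity conditions transfer tell us that $b$ is bi-twisted and non-degenerate, that $Y$ is $b$-convex, and --- crucially --- that $b$ fails \textbf{(A3W)}, since $c$ does. Because $Q$ is only $C^3$, to avoid any difficulty with the fourth-order smoothness used in Loeper's construction I would instead work on a smooth $m$-dimensional surface $S\subseteq X$ transverse to the $L_x$'s and passing through a point at which \textbf{(A3W)} fails: the cost $c|_{S\times Y}$ is genuinely $C^4$, fails \textbf{(A3W)} by part (3) of the equivalence theorem, is bi-twisted (transversality forces each $L_x$ to meet $S$ in a single point) and non-degenerate, and $Y$ is $c|_{S\times Y}$-convex by the projection argument already used for $b$; moreover $Q|_S:S\to Z$ is a $C^3$ diffeomorphism onto an open subset of $Z$, identifying the $S$-problem with the $b$-problem.

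Next I would invoke Loeper's counterexample \cite{loeper} for the $n$-dimensional cost $c|_{S\times Y}$: since \textbf{(A3W)} fails, there exist smooth marginals $\mu_S$ on a compact piece of $S$ and $\nu$ on $Y$ for which the optimal map is discontinuous; equivalently, the associated $c|_{S\times Y}$-concave potential $v$ is not $C^1$. It then remains to produce on $X$ a smooth marginal $\mu$ with $Q_{\#}\mu=\alpha$, where $\alpha:=(Q|_S)_{\#}\mu_S$, and to transfer the irregularity upstairs. For the first task I would use that $Q$ is a $C^3$ submersion with compact fibres $L_x$ (compactness coming from the standing assumption that $\overline{X}$ is compact), hence locally trivial; covering the compact support of $\alpha$ by trivializing neighbourhoods and using a partition of unity on $Z$, I set $\mu$ locally equal to the pullback of $\alpha$ tensored with a fixed smooth probability measure along the fibres. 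The resulting $\mu$ is as smooth as the trivialization (i.e. $C^2$, the best one can hope for given $Q\in C^3$) and satisfies $Q_{\#}\mu=\alpha$. By the factorization result established above, the optimal map for $(\mu,\nu,c)$ is $F=T\circ Q$ with $T$ the discontinuous optimal map of the quotient problem, and by the existence theorem $F(x)=c\text{-}\exp_x(Du(x))$ for a $c$-concave $u$; by Proposition \ref{formula} one has $u=v\circ Q+c(\cdot,y_0)$ with $v$ the non-$C^1$ potential supplied by Loeper.

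The main obstacle is this last transfer: showing that $F$ (equivalently $u$) is genuinely irregular on $X$, not merely that it agrees almost everywhere with $T\circ Q$. I would argue by contradiction. If the optimal map had a continuous representative $\widetilde{F}$, then $Du(x)=D_xc(x,\widetilde{F}(x))$ would admit a continuous representative; since $u$ is semiconcave --- being an infimum of the uniformly-$C^2$ family $c(\cdot,y)-u^c(y)$, with $c\in C^4$ on the compact set $\overline{X}\times\overline{Y}$ --- and a semiconcave function whose almost-everywhere gradient coincides with a continuous function is in fact $C^1$, this would give $u\in C^1(X)$. Restricting to $S$ and composing with $(Q|_S)^{-1}$ would then exhibit $v=\bigl(u-c(\cdot,y_0)\bigr)\big|_S\circ(Q|_S)^{-1}$ as a $C^1$ function on an open subset of $Z$, contradicting Loeper's construction. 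The measure-lifting step, by contrast, is routine once local triviality of $Q$ is in hand, though it does genuinely use compactness of the fibres.
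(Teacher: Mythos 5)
Your architecture is genuinely different from the paper's: you run Loeper's counterexample downstairs (on the quotient, realized on a transverse section $S$) and then try to lift the source marginal back to $X$ through $Q$. Most of the transfer steps are fine and are indeed the content of the surrounding results: bi-twist, non-degeneracy and $b$-convexity for the quotient cost, failure of \textbf{(A3W)} on a transverse section, the factorization $F=T\circ Q$, and your semiconcavity argument ruling out a continuous representative is a reasonable way to make ``discontinuous'' precise. The paper, however, never lifts a measure: it uses Proposition \ref{formula} to show that $u$ is $c$-concave if and only if $u=v\circ Q+c(\cdot,y_0)$ with $v$ $b$-concave, transfers Loeper's criterion (failure of \textbf{(A3W)} for $b$ means the $C^1$, $b$-concave functions are not $L^\infty$-dense in the $b$-concave functions), deduces the same non-density for $c$-concave functions on $X$, and then runs Loeper's argument directly on $X$, where the source marginal is a smooth measure chosen at the outset and the target marginal is a mollified push-forward.

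The genuine gap in your version sits exactly at the lifting step. First, as you yourself concede, any measure obtained by pulling $\alpha$ back through the foliation is only as regular as the foliation charts, i.e. roughly $C^2$, since the leaves are level sets of the $C^3$ map $x\mapsto D_yc(x,y_0)$; so you produce $C^2$ marginals, whereas the statement asserts the existence of \emph{smooth} marginals. You cannot repair this by mollifying $\mu$ afterwards, because that destroys the identity $Q_{\#}\mu=\alpha$ on which both the factorization and the identification of the quotient potential with Loeper's non-$C^1$ potential rest, and Loeper's counterexample is for specific marginals with no stability of the discontinuity under perturbation of $\alpha$. Second, the lift itself is not ``routine'': the fibres $L_x$ are closed in the open set $X$ but need not be compact (compactness of $\overline{X}$ does not give it), so $Q$ is not a proper submersion and Ehresmann-type local triviality is not automatic; near $\partial X$ the fibre geometry can degenerate, which is precisely the phenomenon in the paper's example where a constant density on $X$ pushes forward to a discontinuous density on $Z$. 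One can localize the construction near a compact piece of a single leaf, but then one must also check that Loeper's marginals can be taken supported in the corresponding small subdomain of $Z$, which you do not address. Arguing at the level of potentials, as the paper does, avoids both difficulties and is why it obtains genuinely smooth marginals on $X$.
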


\begin{proof}
 Using Proposition \ref{formula}, it is easy to check that $u: X \rightarrow \mathbb{R}$ is $c$-concave if and only it $u(x)=v(Q(x))+c(x,y_0)$ for some $b$-concave $v: Z \rightarrow \mathbb{R}$.  By \cite{loeper}, we know that if \textbf{(A3W)} fails, then the set of $C^1$, $b$-concave functions is not dense in the set of all $b$-concave functions in the $L^{\infty}(Z)$ topology.  From this it follows easily that the set of $C^1$, $c$-concave functions is not dense in the set of all $c$-concave functions in the $L^{\infty}(X)$ topology.  The argument in \cite{loeper} now implies the desired result.
\end{proof}

\section{Regularity for non-c-convex targets}
The counterexamples of Ma, Trudinger and Wang, combined with the results in the previous section imply that we cannot hope that the optimizer is continuous for arbitrary smooth data if the level sets $L_x(y)$ are not independent of $y$.  It is then natural to ask for which marginals \emph{can} we expect the optimal map to smooth?  In this section, we study this question in the special case when $m=2$ and $n=1$.  We identify conditions on the interaction between the marginals and the cost that allow us to find an explicit formula for the optimal map and prove that it is continuous.

We will assume $Y=(a,b) \subset \mathbb{R}$ is an open interval and that $X$ is a bounded domain in $\mathbb{R}^2$. We will also assume that $c \in C^2(\overline{X} \times \overline{Y})$ satisfies \textbf{(A2)}, which in this setting simply means that the gradient $\triangledown_x(\frac{\partial c}{\partial y})$ never vanishes.  Therefore, the level sets $L_x(y)$ will all be $C^1$ curves.  We define the following set:

\begin{equation*}
 P=\Big\{\tilde{x} \in \overline{X} : \forall\text{ } y_0<y_1 \in Y,\text{ } x \in L_{\tilde{x}}(y_0),\text{ we have } \frac{\partial c(\tilde{x},y_1)}{\partial y}  \leq \frac{\partial c(x,y_1)}{\partial y}\Big\}
\end{equation*}
  When the level sets $L_x(y)$ are independent of $y$, $P$ is the entire domain $X$.  If not, $P$ consists of points $\tilde{x}$ for which the level sets $L_{\tilde{x}}(y)$ evolve with $y$ in a monotonic way.  $L_{\tilde{x}}(y_1)$ divides the region $X$ into two subregions: $\{x:\frac{\partial c(\tilde{x},y_1)}{\partial y} > \frac{\partial c(x,y_1)}{\partial y}\}$ and $\{x:\frac{\partial c(\tilde{x},y_1)}{\partial y} \leq \frac{\partial c(x,y_1)}{\partial y}\}$. $\tilde{x} \in P$ ensures that for $y_0<y_1$, the set $L_{\tilde{x}}(y_0)$ will lie entirely in the latter region.  For interior points, the curves $L_{\tilde{x}}(y_0)$ and $L_{\tilde{x}}(y_1)$ will generically intersect transversely and so $L_{\tilde{x}}(y_0)$ will interect both of these regions; therefore, $P$ will typically consist only of boundary points.  At each boundary point $\tilde{x}$, we can heuristically view the level curves $L_{\tilde{x}}(y)$ as rotating about the point $\tilde{x}$; $P$ consists of those points which rotate in a particular fixed direction.    

In what follows, $\gamma$ will be a solution to the Kantorovich problem.  The support of $\gamma$, or $spt(\gamma)$, is the smallest closed subset of $X \times Y$ of full mass.

\newtheorem{mono}{Lemma}[section]
\begin{mono}\label{pos}
Suppose $\tilde{x} \in P, \text{} x \in X,\text{} y_0,y_1 \in Y$ and $(\tilde{x},y_1), (x,y_0) \in spt(\gamma)$.  Then $\frac{\partial c(x,y_1)}{\partial y} \leq \frac{\partial c(\tilde{x},y_1)}{\partial y}$ if $y_0 <y_1$ and $\frac{\partial c(x,y_1)}{\partial y} \geq \frac{\partial c(\tilde{x},y_1)}{\partial y}$ if $y_0 >y_1$
\end{mono}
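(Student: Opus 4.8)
The plan is to combine the $c$-cyclical monotonicity of $\mathrm{spt}(\gamma)$ with the defining inequality of $P$, reducing the whole statement to controlling the sign of the one-variable function
\[
\psi(s):=\frac{\partial c(x,s)}{\partial y}-\frac{\partial c(\tilde{x},s)}{\partial y},\qquad s\in Y.
\]
The only use of optimality is this: since $\gamma$ solves the Kantorovich problem, its support is $c$-cyclically monotone, so exchanging the second coordinates in the pair $(\tilde{x},y_1),(x,y_0)\in\mathrm{spt}(\gamma)$ cannot lower the total cost, whence
\[
c(\tilde{x},y_1)+c(x,y_0)\le c(\tilde{x},y_0)+c(x,y_1).
\]
Because $c\in C^2(\overline{X}\times\overline{Y})$, the fundamental theorem of calculus in the $y$ variable turns this into a one-sided bound on $\int_{y_0}^{y_1}\psi(s)\,ds$.

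The second ingredient is a dictionary between $\psi$ and the geometry behind $P$. Since $L_{\tilde{x}}(s)=\{\bar{x}:\partial_y c(\bar{x},s)=\partial_y c(\tilde{x},s)\}$, we have $\psi(\bar{s})=0$ exactly when $x\in L_{\tilde{x}}(\bar{s})$; and whenever this happens, membership $\tilde{x}\in P$ — applied with its internal ``$y_0$'' replaced by $\bar{s}$ — forces $\psi$ to keep a fixed sign throughout the one-sided neighbourhood of $\bar{s}$ lying toward $y_1$.

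With these in hand I would conclude by a dichotomy on whether $\psi$ has a zero strictly between $y_0$ and $y_1$. If it does, pick the zero $\bar{s}$ closest to $y_1$: the property above fixes the sign of $\psi$ on the interval joining $\bar{s}$ to $y_1$, hence the sign of $\psi(y_1)$, which is the relation between $\partial_y c(x,y_1)$ and $\partial_y c(\tilde{x},y_1)$ in the statement. If instead $\psi$ has no zero on the closed interval with endpoints $y_0,y_1$, then by continuity it has one constant sign there, and the bound on $\int_{y_0}^{y_1}\psi$ from the first step forces that sign, again pinning down $\psi(y_1)$. The case $y_0>y_1$ is the mirror image, integrating in the reverse direction. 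The step I expect to require the most care is the sign bookkeeping: one must check that the direction in which $\tilde{x}\in P$ propagates the sign of $\psi$ away from a zero is consistent with the direction supplied by $c$-monotonicity, so that the two inputs reinforce rather than conflict, and that this persists in the reversed case. The degenerate possibilities — the extreme zero coinciding with $y_0$, the point $x$ already lying on $L_{\tilde{x}}(y_0)$, $\tilde{x}$ on $\partial X$, or $L_{\tilde{x}}(\bar{s})$ meeting $\partial X$ — are handled directly, and the analytic content (continuity of $\partial_y c$ and the fundamental theorem of calculus) is routine by comparison.
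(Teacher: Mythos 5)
Your two ingredients are the ones the paper uses: $c$-monotonicity of $spt(\gamma)$, written through the fundamental theorem of calculus as a one-sided bound on $\int_{y_0}^{y_1}\psi(s)\,ds$, and the Intermediate Value Theorem converting a sign change of $\psi$ into a point $\bar{s}$ with $x\in L_{\tilde{x}}(\bar{s})$, where the definition of $P$ is invoked. But your ``dictionary'' is stated incorrectly, and the error matters. The definition of $P$ is one-directional: from $x\in L_{\tilde{x}}(\bar{s})$ it constrains $\psi$ only at points to the \emph{right} of $\bar{s}$ (for $y_1'>\bar{s}$ it gives $\partial_y c(\tilde{x},y_1')\le \partial_y c(x,y_1')$, i.e.\ $\psi(y_1')\ge 0$), not on ``the one-sided neighbourhood of $\bar{s}$ lying toward $y_1$''. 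When $y_0<y_1$ these coincide and your zero-branch works. When $y_0>y_1$ the relevant zeros lie to the right of $y_1$, $P$ says nothing about the interval joining such a zero to $y_1$, and ``pick the zero closest to $y_1$ and read off $\psi(y_1)$'' collapses; this case is \emph{not} the mirror image. The correct argument there (the paper's ``similar argument'') is structurally different: assume the conclusion fails at $y_1$, take the first zero $\bar{s}>y_1$ if one exists, use $P$ to get $\psi\ge 0$ on $[\bar{s},y_0]$, note $\psi$ has strict sign on $[y_1,\bar{s})$, and contradict the integral bound $\int_{y_1}^{y_0}\psi\le 0$; the one-sided propagation must be fed back into the monotonicity inequality rather than used to pin down $\psi(y_1)$ directly.

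The second problem is that the ``sign bookkeeping'' you defer is the entire content of the lemma, and carrying it out with the printed definition of $P$ yields $\psi(y_1)\ge 0$ when $y_0<y_1$ and $\psi(y_1)\le 0$ when $y_0>y_1$, i.e.\ $\partial_y c(x,y_1)\ge \partial_y c(\tilde{x},y_1)$ for $y_0<y_1$ --- the \emph{reverse} of the inequalities in the quoted statement. This is in fact what the paper's own proof establishes (it assumes $\partial_y c(\tilde{x},y_1)>\partial_y c(x,y_1)$ and derives a contradiction), and it is the version that the definition of mass splitting and Lemma \ref{divi} require; the statement as printed appears to have its inequalities transposed. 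A blind proof has to confront this discrepancy explicitly; your hope that the two inputs ``reinforce rather than conflict'' is exactly where the issue sits, since a correct execution of your own plan does not deliver the inequalities as quoted. So the approach is the right one, but as written the proposal neither fixes the signs nor handles the $y_0>y_1$ case by a valid argument.
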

\begin{proof}
The support of $\gamma$ is $c$-monotone (see \cite{sk} for a proof); this means that $c(\tilde{x},y_1) + c(x,y_0) \leq c(\tilde{x},y_0) + c(x,y_1)$.  If $y_0 <y_1$, this implies 
\begin{equation}\label{mono}                                                                                          \int_{y_0}^{y_1}\frac{\partial c(\tilde{x},y)}{\partial y} dy \leq \int_{y_0}^{y_1}\frac{\partial c(x,y)}{\partial y} dy.                                                                                                                   \end{equation}
Assume $\frac{\partial c(\tilde{x},y_1)}{\partial y} > \frac{\partial c(x,y_1)}{\partial y}$. We claim that this implies $\frac{\partial c(\tilde{x},y)}{\partial y} > \frac{\partial c(x,y)}{\partial y}$ for all $y \in [y_0,y_1]$, which contradicts (\ref{mono}).  To see this, suppose that there is some $y \in [y_0,y_1]$ such that $\frac{\partial c(\tilde{x},y)}{\partial y} \leq \frac{\partial c(x,y)}{\partial y}$; the Intermediate Value Theorem then implies the existence of a $\overline{y} \in [y,y_1)$ such that $\frac{\partial c(\tilde{x},\overline{y})}{\partial y} = \frac{\partial c(x,\overline{y})}{\partial y}$, or $x \in L_{\tilde{x}}(\overline{y})$.  This, together with our assumption $\frac{\partial c(\tilde{x},y_1)}{\partial y} > \frac{\partial c(x,y_1)}{\partial y}$, violates the condition $\tilde{x} \in P$.  

A similar argument shows $\frac{\partial c(\tilde{x},y_1)}{\partial y} \geq \frac{\partial c(x,y_1)}{\partial y}$ if $y_0 >y_1$.
\end{proof}

\newtheorem{splitting}[mono]{Definition}
\begin{splitting}
 We say $y$ splits the mass at $x$ if
\begin{equation*}
 \mu\Big(\{\overline{x} : \frac{\partial c(x,y)}{\partial y}  < \frac{\partial c(\overline{x},y)}{\partial y}\}\Big)=\nu( [0,y))
\end{equation*}
If $\mu$ and $\nu$ are absolutely continuous with respect to Lebesgue measure, this is equivalent to 
\begin{equation*}
 \mu\Big(\{\overline{x} : \frac{\partial c(x,y)}{\partial y}  > \frac{\partial c(\overline{x},y)}{\partial y}\}\Big)=\nu( [y,1])
\end{equation*}
\end{splitting}

Lemma \ref{pos} immediately implies the following.

\newtheorem{divi}[mono]{Lemma}
\begin{divi}\label{divi}
Suppose $\mu$ and $\nu$ are absolutely continuous with respect to Lebesgue measure.  Then if $\tilde{x} \in P, y \in Y$ and $(\tilde{x},y) \in spt(\gamma)$, $y$ splits the mass at $\tilde{x}$.  
\end{divi}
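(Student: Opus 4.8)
The plan is to promote Lemma~\ref{pos}, which is an almost-everywhere statement about points of $spt(\gamma)$, to an identity of masses by means of a two-sided estimate. Fix $\tilde{x}\in P$ and $y\in Y$ with $(\tilde{x},y)\in spt(\gamma)$. The level curve $L:=L_{\tilde{x}}(y)$ divides $X$ into the two regions $R^{+}=\{\overline{x}:\frac{\partial c(\overline{x},y)}{\partial y}>\frac{\partial c(\tilde{x},y)}{\partial y}\}$ and $R^{-}=\{\overline{x}:\frac{\partial c(\overline{x},y)}{\partial y}<\frac{\partial c(\tilde{x},y)}{\partial y}\}$. Under our standing hypotheses \textbf{(A2)} makes each $L_{x}(y)$ a $C^{1}$ curve, hence $2$-dimensional Lebesgue-null, and since $\mu$ is absolutely continuous, $\mu(L)=0$, so $\mu(R^{+})+\mu(R^{-})=1$. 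Likewise $\nu$ is absolutely continuous, so $\nu(\{y\})=0$, whence $\gamma(X\times\{y\})=0$ and $\nu([0,y))+\nu([y,1])=1$.

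Next I would rewrite Lemma~\ref{pos} in terms of $\gamma$. Since $\gamma(spt(\gamma))=1$, for $\gamma$-almost every point $(x,y_{0})$ we may apply Lemma~\ref{pos} to the pair $(x,y_{0})$, $(\tilde{x},y)$; its conclusion places $x$ on one fixed side of the curve $L$ when $y_{0}<y$ and on the other fixed side when $y_{0}>y$. Matching the orientation in Lemma~\ref{pos} with the sets above, this says that the rectangle $R^{-}\times[0,y)$ is $\gamma$-null and the rectangle $R^{+}\times(y,1]$ is $\gamma$-null (or the analogous pair with $R^{+}$ and $R^{-}$ interchanged, according to the direction of the inequalities in Lemma~\ref{pos}). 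Using that $\gamma$ has $Y$-marginal $\nu$, the mass lying over $[0,y)$, which equals $\nu([0,y))$, is therefore carried by $(X\setminus R^{-})\times[0,y)$, so, since $\mu(L)=0$,
\begin{equation*}
\mu(R^{+})=\mu(X\setminus R^{-})\ \geq\ \gamma\big((X\setminus R^{-})\times[0,y)\big)=\gamma\big(X\times[0,y)\big)=\nu([0,y)),
\end{equation*}
and in the same way, from the $y_{0}>y$ case, $\mu(R^{-})\geq\nu([y,1])$.

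Finally I would close the estimate: summing the two bounds gives $1=\mu(R^{+})+\mu(R^{-})\geq\nu([0,y))+\nu([y,1])=1$, so both inequalities are equalities. In particular $\mu(R^{+})=\nu([0,y))$, which is precisely the statement that $y$ splits the mass at $\tilde{x}$; since $\mu(L)=0$ this is equivalent to the second form $\mu(R^{-})=\nu([y,1])$ listed in the definition. I do not foresee a genuine obstacle: the content is the routine measure-theoretic bookkeeping — converting the $\gamma$-a.e.\ conclusion of Lemma~\ref{pos} into the vanishing of $\gamma$ on the correct product rectangles, discarding the $\mu$-null curve $L$, and using that $\nu$ has no atom at $y$ so the borderline case $y_{0}=y$ contributes nothing — together with the small care needed to align the direction of the inequality in Lemma~\ref{pos} with the direction in the definition of mass-splitting.
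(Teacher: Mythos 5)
Your argument is the right one, and it is essentially the paper's own: the paper offers no proof beyond the remark that Lemma~\ref{pos} immediately implies the statement, and your bookkeeping (the two $\gamma$-null rectangles, $\mu(L_{\tilde{x}}(y))=0$ by \textbf{(A2)} and absolute continuity, $\nu(\{y\})=0$, and the two-sided estimate forced to equality by total mass) is exactly the omitted ``immediate'' step.

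The one point you cannot leave open is the parenthetical ``or the analogous pair with $R^{+}$ and $R^{-}$ interchanged.'' The definition of mass splitting is not symmetric in $R^{+}$ and $R^{-}$: it demands $\mu(R^{+})=\nu([0,y))$ with $R^{+}=\{\overline{x}:\frac{\partial c(\tilde{x},y)}{\partial y}<\frac{\partial c(\overline{x},y)}{\partial y}\}$, so if the interchanged pair were the correct reading, your computation would instead give $\mu(R^{-})=\nu([0,y))$, which is a different (and in general false) conclusion. The pair you actually wrote down --- $R^{-}\times[0,y)$ and $R^{+}\times(y,1]$ are $\gamma$-null, i.e.\ support points $(x,y_{0})$ with $y_{0}<y$ satisfy $\frac{\partial c(\tilde{x},y)}{\partial y}\leq\frac{\partial c(x,y)}{\partial y}$ --- is the correct one: it is what the proof of Lemma~\ref{pos} actually establishes from $c$-monotonicity together with $\tilde{x}\in P$ (the contradiction there is drawn from assuming $\frac{\partial c(\tilde{x},y_{1})}{\partial y}>\frac{\partial c(x,y_{1})}{\partial y}$ when $y_{0}<y_{1}$), even though the inequality as printed in the statement of Lemma~\ref{pos} points the other way; that appears to be a sign slip in the statement, and only the direction coming from its proof makes the conclusion agree with the definition of splitting. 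With that direction pinned down rather than hedged, your proof is complete and coincides with the intended argument.
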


\newtheorem{isone}[mono]{Lemma}
\begin{isone} \label{ex}
 Suppose $\mu$ and $\nu$ are absolutely continuous with respect to Lebesgue.  Then, for each $x \in X$ there is a $y \in Y$ that splits the mass at $x$.
\end{isone}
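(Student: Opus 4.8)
The plan is to fix $x \in X$ and reduce the existence of a splitting value to a one–dimensional intermediate value argument. Define, for $y$ in the closed interval $\overline{Y}$,
\[
\Phi(y) := \mu\big(A_x^+(y)\big) - \nu([0,y)), \qquad A_x^+(y) := \Big\{\overline{x}\in X:\ \frac{\partial c(\overline{x},y)}{\partial y} > \frac{\partial c(x,y)}{\partial y}\Big\}.
\]
A value $y \in Y$ with $\Phi(y)=0$ is precisely a value that splits the mass at $x$, in the first of the two equivalent senses of the Definition. So it suffices to produce a zero of $\Phi$ lying in the open interval $Y$.

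The main technical ingredient is continuity of $\Phi$ on $\overline{Y}$. The term $y \mapsto \nu([0,y))$ is continuous because $\nu$, being absolutely continuous, has no atoms. For the other term, recall that by \textbf{(A2)} and the implicit function theorem each level curve $L_x(y)$ is a $C^1$ curve in $X\subseteq\mathbb{R}^2$, hence Lebesgue–null, hence $\mu$–null. Thus if $y_k \to y_0$, the indicator functions $\overline{x}\mapsto \mathbf{1}\big[\partial_y c(\overline{x},y_k) > \partial_y c(x,y_k)\big]$ converge pointwise to $\mathbf{1}\big[\partial_y c(\overline{x},y_0) > \partial_y c(x,y_0)\big]$ at every $\overline{x}\notin L_x(y_0)$, that is $\mu$–almost everywhere, so dominated convergence gives $\mu(A_x^+(y_k)) \to \mu(A_x^+(y_0))$. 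Hence $\Phi$ is continuous.

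Next I would read off the boundary behaviour (normalizing $Y=(0,1)$ as in the Definition). As $y\to 0^+$ we have $\nu([0,y))\to 0$, so $\Phi(y)\to \mu(A_x^+(0))\ge 0$; as $y\to 1^-$ we have $\nu([0,y))\to 1$ while $\mu(A_x^+(y))\le 1$, so $\Phi(y)\to \mu(A_x^+(1))-1\le 0$. Combined with continuity, the intermediate value theorem yields some $y$ with $\Phi(y)=0$.

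The one point that genuinely needs care — and which I expect to be the main obstacle — is guaranteeing that this zero occurs at an \emph{interior} point of $Y$ rather than escaping to an endpoint (a priori $\Phi$ could be, say, positive throughout $(0,1)$ with $\Phi(1)=0$). Here I would exploit that $x$ is an interior point of $X$ and that, by \textbf{(A2)}, $\nabla_{\overline{x}}\big(\partial_y c(\overline{x},y)\big)$ does not vanish at $\overline{x}=x$, so $x$ is not a local extremum of $\overline{x}\mapsto \partial_y c(\overline{x},y)$; consequently both $A_x^+(y)$ and its complement meet every neighbourhood of $x$ in an open set. Feeding this in at $y=0$ and $y=1$ forces $\Phi(0)>0$ and $\Phi(1)<0$ strictly, so by continuity $\Phi$ is positive near $0$ and negative near $1$ and every one of its zeros lies in $Y$; this yields the desired splitting value. (The step where one must be most careful is precisely the passage from ``meets every neighbourhood of $x$ in an open set'' to ``has positive $\mu$–measure'', which is where the structure of $\mu$ near $x$ — and, in the degenerate situation of a density vanishing near $x$, a small additional argument — enters.)
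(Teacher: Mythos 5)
Your argument is essentially the paper's own proof: define $f_x(y)=\mu\big(\{\overline{x}:\frac{\partial c(x,y)}{\partial y}<\frac{\partial c(\overline{x},y)}{\partial y}\}\big)-\nu([0,y))$, note its continuity (your justification via the $\mu$-negligibility of the level curves $L_x(y)$ is exactly the right reason, which the paper leaves implicit), observe $f_x(0)\geq 0$ and $f_x(1)\leq 0$, and apply the Intermediate Value Theorem. The paper stops there and does not attempt your final strengthening to strict signs at the endpoints; as you yourself suspect, that strengthening does not follow from \textbf{(A2)} together with absolute continuity alone, since the density of $\mu$ may vanish on a neighbourhood of $x$, so $\mu(A_x^+(0))=0$ is genuinely possible. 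This is harmless, however: the endpoint case is tolerated later anyway (Theorem \ref{form} locates the splitting value only in $\overline{Y}$), so your core IVT argument already matches what the paper proves and uses.
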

\begin{proof}
The function $y \mapsto f_x(y):= \mu\Big(\{\overline{x} : \frac{\partial c(x,y)}{\partial y}  < \frac{\partial c(\overline{x},y)}{\partial y}\}\Big)-\nu\big( [0,y)\big)$ is continuous.  Observe that $f_x(0) \geq 0$ and $f_x(1) \leq 0$; the result now follows from the Intermediate Value Theorem.
\end{proof}
Similarly, it is straightforward to prove the following lemma.
\newtheorem{isoney}[mono]{Lemma}
\begin{isoney}\label{exy}
 Suppose $\mu$ and $\nu$ are absolutely continuous with respect to Lebesgue.  Then, for each $y \in Y$ there is an $x \in X$ such that $y$ splits the mass at $\overline{x}$ if and only if $\overline{X} \in L_x(y)$.
\end{isoney}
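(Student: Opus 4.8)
The plan is to run the argument of Lemma~\ref{ex} in the other variable: fix $y \in Y$ and vary over the level curves $L_{\overline{x}}(y)$ of the function $\overline{x} \mapsto \frac{\partial c}{\partial y}(\overline{x},y)$, which we parametrize by the scalar $t = \frac{\partial c}{\partial y}(\overline{x},y)$. The key preliminary observation is that whether $y$ splits the mass at a point depends on that point only through its level curve: the set $\{\overline{x} : \frac{\partial c(x,y)}{\partial y} < \frac{\partial c(\overline{x},y)}{\partial y}\}$ appearing in the definition of splitting depends on $x$ only via the value $\frac{\partial c(x,y)}{\partial y}$. So it suffices to produce a single value $t^{*}$ of this scalar for which the corresponding one-sided $\mu$-mass equals $\nu([0,y))$, and then to check that $t^{*}$ is the unique such value.

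To this end I would set
\[
h(t) := \mu\Big(\big\{\overline{x} \in X : \tfrac{\partial c(\overline{x},y)}{\partial y} > t\big\}\Big).
\]
By \textbf{(A2)} the level set $\{\overline{x} : \frac{\partial c(\overline{x},y)}{\partial y} = t\}$ is a $C^{1}$ curve, hence $\mu$-null since $\mu$ is absolutely continuous, so $h$ is continuous and non-increasing in $t$. Since $\overline{X}$ is compact and $c \in C^{2}$, the scalar $\frac{\partial c}{\partial y}(\cdot,y)$ has a minimum $t_{\min}$ and a maximum $t_{\max}$ on $\overline{X}$, and $h(t) = 1$ for $t < t_{\min}$ while $h(t) = 0$ for $t \geq t_{\max}$. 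By the Intermediate Value Theorem there is $t^{*} \in [t_{\min},t_{\max}]$ with $h(t^{*}) = \nu([0,y))$, and (applying the Intermediate Value Theorem once more to the continuous function $\frac{\partial c}{\partial y}(\cdot,y)$ on the connected domain $X$) a point $x \in X$ with $\frac{\partial c}{\partial y}(x,y) = t^{*}$. Then $L_x(y) = \{\overline{x} : \frac{\partial c(\overline{x},y)}{\partial y} = t^{*}\}$, and every $\overline{x} \in L_x(y)$ satisfies $h\big(\frac{\partial c(\overline{x},y)}{\partial y}\big) = h(t^{*}) = \nu([0,y))$, i.e. $y$ splits the mass at $\overline{x}$. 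Conversely, if $y$ splits the mass at some $\overline{x}$, then $h\big(\frac{\partial c(\overline{x},y)}{\partial y}\big) = \nu([0,y)) = h(t^{*})$, and strict monotonicity of $h$ forces $\frac{\partial c(\overline{x},y)}{\partial y} = t^{*}$, i.e. $\overline{x} \in L_x(y)$.

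The step I expect to be the main obstacle is this last appeal to strict monotonicity of $h$: absolute continuity alone gives only that $h$ is continuous and non-increasing, so to exclude a plateau (a strip on which the density of $\mu$ vanishes) one needs positivity of the density of $\mu$ on $X$; I would either adopt this as a standing hypothesis or weaken the ``if and only if'' to hold up to $\mu$-null sets. A minor secondary point is the boundary cases $\nu([0,y)) \in \{0,1\}$, where $t^{*}$ sits at an endpoint of $[t_{\min},t_{\max}]$ and one must still be able to choose $x$ in the \emph{open} domain $X$; since $y$ lies in the open interval $Y$ and $\nu$ is absolutely continuous (with positive density), $\nu([0,y))$ stays strictly between $0$ and $1$, so $t^{*}$ is interior and no difficulty arises.
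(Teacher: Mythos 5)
Your argument is essentially the proof the paper has in mind: the paper gives no proof beyond ``similarly, it is straightforward,'' meaning the Intermediate Value Theorem argument of Lemma \ref{ex} run in the other variable, and that is exactly your monotone function $h(t)=\mu\big(\{\overline{x}: \frac{\partial c}{\partial y}(\overline{x},y)>t\}\big)$ combined with the observation that whether $y$ splits the mass at $x$ depends on $x$ only through the value $\frac{\partial c}{\partial y}(x,y)$, i.e.\ only through the level curve $L_x(y)$. Your existence half (the ``if'' direction) is complete and correct: continuity of $h$ does follow from \textbf{(A2)} together with absolute continuity of $\mu$, since each level set is a $C^1$ curve and hence $\mu$-null. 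The obstacle you flag on the ``only if'' half is a genuine imprecision in the statement rather than a defect of your method: without strict monotonicity of $h$ (e.g.\ a positive density for $\mu$, or at least no band of zero $\mu$-mass between two level curves), two distinct level curves can both be split by $y$, and then no single $x$ makes the biconditional literally true. But note that this direction is never used in the paper: in the proof of Theorem \ref{form} the lemma is invoked only to produce \emph{some} $\overline{x}$ at which $\overline{y}$ splits the mass, and what is needed afterwards is precisely your preliminary observation that splitting is constant along $L_{\overline{x}}(\overline{y})$. Your boundary caveat is of the same nature: absolute continuity of $\nu$ alone does not force $\nu([0,y))\in(0,1)$, so in degenerate cases the split point may exist only in $\overline{X}$ rather than $X$; reading the conclusion with $x\in\overline{X}$ is consistent with how splitting is used for points of $\overline{X}\times\overline{Y}$ in Theorem \ref{form} and again does not affect the application.
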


\newtheorem{mass}[mono]{Definition}
\begin{mass}
 Let $\tilde{x} \in P$.  We say $\tilde{x}$ satisfies the mass comparison property (MCP) if for all $y_0 < y_1 \in Y$ we have 
\begin{equation*}
 \mu\Big( \bigcup_{y \in [y_0, y_1]}L_{\tilde{x}}(y)\Big) < \nu\big([y_0, y_1]\big)
\end{equation*}
\end{mass}

 In the case when the level sets $L_x(y)$ are independent of $y$, the MCP is satisfied for all $x \in P=\overline{X}$ as long as $\mu$ assigns zero mass to every $L_x(y)$ and $\nu$ assigns non-zero mass to every open interval.  Alternatively, in view of the previous section, we know that in this case the cost has the form $c(Q(x),y)$, where $Q: X \rightarrow Z$ and $Z =[z_0,z_1] \subseteq \mathbb{R}$ is an interval; the MCP boils down to the assumption that $\alpha$ assigns zero mass to all singletons and $\nu$ assigns non-zero mass to every open interval. 

\newtheorem{uniqueP}[mono]{Lemma}
\begin{uniqueP}\label{uniqueP}
  Suppose $\mu$ and $\nu$ are absolutely continuous with respect to Lebesgue measure and that $\tilde{x} \in P$ satisfies the MCP.  Then there is a unique $y \in Y$ that splits the mass at $\tilde{x}$.
\end{uniqueP}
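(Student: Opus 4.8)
The plan is to obtain existence of a splitting value directly from Lemma~\ref{ex} and to prove uniqueness by a strict monotonicity argument. For $\tilde x\in P$, set
\[
A(y):=\Big\{\overline x\in X:\frac{\partial c(\tilde x,y)}{\partial y}<\frac{\partial c(\overline x,y)}{\partial y}\Big\},\qquad M(y):=\mu(A(y)),\qquad N(y):=\nu\big([0,y)\big),
\]
so that $y$ splits the mass at $\tilde x$ precisely when $M(y)=N(y)$. I will show that $M(y_1)-M(y_0)<N(y_1)-N(y_0)$ for all $y_0<y_1$ in $Y$; then $M-N$ is strictly decreasing on $Y$, hence vanishes at most once, and combined with Lemma~\ref{ex} (whose proof uses only continuity and the Intermediate Value Theorem, and therefore applies to $\tilde x\in P\subseteq\overline X$ as well) this yields exactly one splitting value.

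The first step is the monotonicity $M(y_0)\le M(y_1)$ for $y_0<y_1$, and this is where the hypothesis $\tilde x\in P$ is used. I claim $A(y_0)\subseteq A(y_1)\cup L_{\tilde x}(y_1)$: if $\overline x\in A(y_0)$ and $\frac{\partial c(\tilde x,y_1)}{\partial y}>\frac{\partial c(\overline x,y_1)}{\partial y}$, then by exactly the sign-trapping argument used in the proof of Lemma~\ref{pos} (which for this implication needs only $\tilde x\in P$, continuity of $y\mapsto\frac{\partial c(\overline x,y)}{\partial y}-\frac{\partial c(\tilde x,y)}{\partial y}$, and the Intermediate Value Theorem, not $spt(\gamma)$) one obtains $\frac{\partial c(\tilde x,y)}{\partial y}>\frac{\partial c(\overline x,y)}{\partial y}$ for all $y\in[y_0,y_1]$, which at $y=y_0$ contradicts $\overline x\in A(y_0)$. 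Since \textbf{(A2)} makes $L_{\tilde x}(y_1)$ a $C^1$ curve and $\mu$ is absolutely continuous with respect to Lebesgue measure, $\mu(L_{\tilde x}(y_1))=0$, so $A(y_0)\subseteq A(y_1)$ up to a $\mu$-null set and $M(y_0)\le M(y_1)$.

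For the strict increment, observe that $\overline x\in A(y_1)\setminus A(y_0)$ means $\frac{\partial c(\overline x,y_1)}{\partial y}>\frac{\partial c(\tilde x,y_1)}{\partial y}$ but $\frac{\partial c(\overline x,y_0)}{\partial y}\le\frac{\partial c(\tilde x,y_0)}{\partial y}$, so the Intermediate Value Theorem produces $\overline y\in[y_0,y_1]$ with $\overline x\in L_{\tilde x}(\overline y)$; hence $A(y_1)\setminus A(y_0)\subseteq\bigcup_{y\in[y_0,y_1]}L_{\tilde x}(y)$. Using the monotonicity from the previous step and then the MCP,
\[
M(y_1)-M(y_0)=\mu\big(A(y_1)\setminus A(y_0)\big)\le\mu\Big(\bigcup_{y\in[y_0,y_1]}L_{\tilde x}(y)\Big)<\nu\big([y_0,y_1]\big)=N(y_1)-N(y_0),
\]
the last equality holding because $\nu$ is absolutely continuous, so $\nu(\{y_1\})=0$. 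This is the required strict inequality, and the lemma follows.

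The only genuinely delicate point is the monotonicity $M(y_0)\le M(y_1)$, which hinges entirely on $\tilde x\in P$ through the argument borrowed from Lemma~\ref{pos}; the rest is bookkeeping with the Intermediate Value Theorem and the fact that $C^1$ curves are Lebesgue-null. It is also worth recording that the MCP can hold only when $\nu$ has full support in $Y$ (otherwise its right-hand side would be zero while its left-hand side is nonnegative), so that $N$ is strictly increasing — consistent with, though not needed for, the argument above.
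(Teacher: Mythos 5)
Your proof is correct, and it reorganizes the paper's argument rather than reproducing it. The paper assumes two splitting values $y_0<y_1$ and derives a contradiction directly: by the Intermediate Value Theorem, the set of points at which $\frac{\partial c}{\partial y}(\cdot,y)-\frac{\partial c}{\partial y}(\tilde x,y)$ is positive at $y_0$ and negative at $y_1$ is trapped in $\bigcup_{y\in[y_0,y_1]}L_{\tilde x}(y)$ (this is (\ref{mcv})), while the two splitting identities and absolute continuity force that set to carry $\mu$-mass at least $\nu([y_0,y_1])$ (this is (\ref{meas})), contradicting the MCP; the hypothesis $\tilde x\in P$ enters there only because the MCP is formulated at points of $P$. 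You instead prove the stronger statement that $M-N$ is strictly decreasing on all of $Y$, using $\tilde x\in P$ (via the same trapping argument as in Lemma \ref{pos}) to nest the sets $A(y)$ up to the $\mu$-null curve $L_{\tilde x}(y_1)$, and then trapping the increment set $A(y_1)\setminus A(y_0)$ --- the points whose sign flips in the opposite direction --- in the same union of level curves before invoking the MCP. Both arguments rest on the same two pillars, IVT-trapping in the family $\{L_{\tilde x}(y)\}_{y\in[y_0,y_1]}$ and the MCP as the measure cap; what your version buys is a global monotonicity statement valid for every pair $y_0<y_1$, not only for splitting values, and bookkeeping that follows the inequality direction of the splitting Definition literally, whereas (\ref{meas}) is most naturally read with the reversed inequality convention (and in any case only a lower bound is needed there), so your route quietly sidesteps that sign issue. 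Two minor remarks: your nesting step is not logically necessary, since $M(y_1)-M(y_0)\le\mu\big(A(y_1)\setminus A(y_0)\big)$ holds for arbitrary sets, though it does justify the equality you assert; and your observation that Lemma \ref{ex} extends to $\tilde x\in\overline X$ by the same continuity/IVT proof correctly handles boundary points of $P$, a point the paper leaves implicit.
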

\begin{proof}
 Existence follows from Lemma \ref{ex}; we must only show uniqueness.  Suppose $y_0<y_1 \in Y$ both split the mass at $\tilde{x}$.  For any $x$ such that $\frac{\partial c(x,y_0)}{\partial y}  > \frac{\partial c(\tilde{x},y_0)}{\partial y}$ and $\frac{\partial c(x,y_1)}{\partial y}  < \frac{\partial c(\tilde{x},y_1)}{\partial y}$ the Intermediate Value Theorem yields a $y \in [y_0, y_1]$ such that $x \in L_{\tilde{x}}(y)$; hence, 
\begin{eqnarray*}
 \big\{x : \frac{\partial c(x,y_0)}{\partial y}  > \frac{\partial c(\tilde{x},y_0)}{\partial y}\big\} &\bigcap& \big\{x:\frac{\partial c(x,y_1)}{\partial y}  <\frac{\partial c(\tilde{x},y_1)}{\partial y}\big\} \\
& \subseteq  & \bigcup_{y \in [y_0, y_1]}L_{\tilde{x}}(y)
\end{eqnarray*}
Therefore
\begin{eqnarray}
 \mu\Big(\big\{x : \frac{\partial c(x,y_0)}{\partial y}  > \frac{\partial c(\tilde{x},y_0)}{\partial y}\big\} &\bigcap&  \big\{x:\frac{\partial c(x,y_1)}{\partial y}  < \frac{\partial c(\tilde{x},y_1)}{\partial y}\big\}\Big) \nonumber \\
& \leq  & \mu\big( \bigcup_{y \in [y_0, y_1]}L_{\tilde{x}}(y)\big) \label{mcv}
\end{eqnarray}
Now, absolute continuity of $\mu$ and $\nu$ together with the assumption that $y_0$ and $y_1$ split the mass at $\tilde{x}$ yield
\begin{eqnarray}
 \mu\Big(\big\{x : \frac{\partial c(x,y_0)}{\partial y}  > \frac{\partial c(\tilde{x},y_0)}{\partial y}\big\} &\bigcap& \big\{x:\frac{\partial c(\tilde{x},y_1)}{\partial y}  < \frac{\partial c(\tilde{x},y_1)}{\partial y}\big\}\Big) \nonumber  \\
&= & \nu\big([y_0,y_1]\big) \label{meas}
\end{eqnarray}
 Combining (\ref{mcv}) and (\ref{meas}) and the MCP now yields a contradiction.
\end{proof}
We are now ready to prove the main result of this section.
\newtheorem{form}[mono]{Theorem}
\begin{form}\label{form}
 Suppose $\mu$ and $\nu$ are absolutely continuous with respect to Lebesgue.  Suppose that for all $x,y \in \overline{X} \times \overline{Y}$ such that $y$ splits the mass at $x$ there exists an $\tilde{x} \in P \cap L_x(y)$ satisfying the MCP.  Then for each $x \in \overline{X}$ there is a unique $y \in \overline{Y}$ that splits the mass at $x$.  Moreover, $(x,y) \in spt(\gamma)$ and $(x, \overline{y}) \notin spt(\gamma)$ for all other $\overline{y} \in \overline{Y}$.  Therefore, the optimal map is well defined everywhere.
\end{form}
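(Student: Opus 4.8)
The plan is to produce the optimal map explicitly. For $x\in\overline X$, call $y(x)$ a value that splits the mass at $x$; I want to show that $y(x)$ is uniquely determined, that $y(\cdot)$ is continuous, and that $\gamma$ is concentrated exactly on its graph. Existence of a splitting value is Lemma~\ref{ex}: its proof applies the Intermediate Value Theorem to the continuous function $g_x(y):=\mu(\{\overline x:\partial_y c(x,y)<\partial_y c(\overline x,y)\})-\nu([a,y))$, which satisfies $g_x(a)\ge 0\ge g_x(b)$, and this argument is valid for every $x\in\overline X$. The substance is uniqueness, which I would argue as follows. Suppose $y_0<y_1$ both split the mass at $x$. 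The hypothesis furnishes pivots $\tilde x_0\in P\cap L_x(y_0)$ and $\tilde x_1\in P\cap L_x(y_1)$, each satisfying the MCP. Since $\tilde x_i\in L_x(y_i)$ we have $\partial_y c(\tilde x_i,y_i)=\partial_y c(x,y_i)$, so $y_i$ splits the mass at $\tilde x_i$ as well; by Lemma~\ref{uniqueP} it is the only value doing so, i.e.\ $g_{\tilde x_i}$ vanishes only at $y_i$, and hence $g_{\tilde x_i}>0$ on $[a,y_i)$.

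Now I compare the two pivots. At the parameter $y_0$ we have $g_{\tilde x_1}(y_0)>0$ (since $y_0<y_1$) while $g_{\tilde x_0}(y_0)=0$; because the sets $\{\overline x:\partial_y c(\overline x,y_0)>t\}$ are nested in $t$, this forces $\partial_y c(\tilde x_1,y_0)<\partial_y c(\tilde x_0,y_0)$. On the other hand, the defining property of $P$ at $\tilde x_0$, applied to the point $x\in L_{\tilde x_0}(y_0)$ with $y_0<y_1$, gives $\partial_y c(\tilde x_0,y_1)\le\partial_y c(x,y_1)=\partial_y c(\tilde x_1,y_1)$; equality would put $\tilde x_0$ in $L_x(y_1)$ and so make $y_1$ split the mass at $\tilde x_0$, contradicting Lemma~\ref{uniqueP}, so the inequality is strict. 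Thus $s\mapsto\partial_y c(\tilde x_1,s)-\partial_y c(\tilde x_0,s)$ is negative at $y_0$ and positive at $y_1$, so it vanishes at some $\overline y\in(y_0,y_1)$: $\tilde x_0$ and $\tilde x_1$ lie on a common level set of $\partial_y c(\cdot\,,\overline y)$. Then the defining property of $P$ at $\tilde x_1$, applied to $\tilde x_0$ in that level set with $\overline y<y_1$, gives $\partial_y c(\tilde x_1,y_1)\le\partial_y c(\tilde x_0,y_1)$, contradicting the strict inequality just obtained. Hence $y(x)$ is unique; and since $y(\cdot)$ is the sign-changing zero of the function $g_x$, which depends continuously on $x$, the map $y(\cdot):\overline X\to\overline Y$ is continuous.

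It remains to locate $\operatorname{spt}\gamma$. By Lemma~\ref{exy}, for every $y^{*}\in Y$ the set of points at which $y^{*}$ splits the mass is a single level set $\ell_{y^{*}}$, and applying the hypothesis to a point of it produces a pivot $p(y^{*})\in P\cap\ell_{y^{*}}$ satisfying the MCP. Assuming (as we may) that $\mu$ has full support, $\pi_X(\operatorname{spt}\gamma)=\overline X$, so $(p(y^{*}),y')\in\operatorname{spt}\gamma$ for some $y'$; Lemma~\ref{divi} and the uniqueness above give $y'=y(p(y^{*}))=y^{*}$, hence $(p(y^{*}),y^{*})\in\operatorname{spt}\gamma$. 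Now suppose $(x_0,y_0)\in\operatorname{spt}\gamma$ with $y_0\ne y(x_0)$, say $y_0>y(x_0)$ (the reverse case is symmetric). For $y^{*}\in(y(x_0),y_0)$, Lemma~\ref{pos} applied to the support points $(p(y^{*}),y^{*})$ and $(x_0,y_0)$ (here $y^{*}<y_0$) gives $\partial_y c(x_0,y^{*})\ge\partial_y c(p(y^{*}),y^{*})$, while for $y^{*}\in(y_0,b)$ it gives the reverse. The threshold $\theta(y^{*}):=\partial_y c(p(y^{*}),y^{*})$ — the common value of $\partial_y c(\cdot\,,y^{*})$ along $\ell_{y^{*}}$, independent of the choice of pivot — depends continuously on $y^{*}$, so letting $y^{*}\to y_0$ forces $\partial_y c(x_0,y_0)=\theta(y_0)$, i.e.\ $x_0\in\ell_{y_0}$ and $y_0$ splits the mass at $x_0$, contradicting $y_0\ne y(x_0)$. (When $y_0$ is an endpoint of $\overline Y$ one argues slightly differently, using that $\nu$ has no atoms and $\mu$ has full support.) Therefore $\operatorname{spt}\gamma$ is contained in the graph of $y(\cdot)$, which is closed by continuity; since $\gamma$ has first marginal $\mu$ it is concentrated on that graph, and since the graph projects onto all of $\overline X$, $\operatorname{spt}\gamma$ equals it. In particular $(x,y(x))\in\operatorname{spt}\gamma$ while $(x,\overline y)\notin\operatorname{spt}\gamma$ for every $\overline y\ne y(x)$, so $x\mapsto y(x)$ is an everywhere-defined optimal map.

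The main obstacle is the uniqueness step: the hypothesis only supplies a pivot on $L_x(y_0)$ and a (generally different) pivot on $L_x(y_1)$, so Lemma~\ref{uniqueP} cannot be invoked directly on a single point. The way around this is to show the two pivots must "cross'' somewhere in $(y_0,y_1)$ — forced on one side by monotonicity of the splitting defect $g_{\tilde x_1}$ and on the other by the $P$-property at $\tilde x_0$ — and then to apply the $P$-property at \emph{both} pivots at the crossing parameter, which is exactly the configuration yielding the contradiction. A secondary technical point, needed in the last paragraph, is the continuity and single-valuedness of the threshold $\theta(\cdot)$; this rests on the uniqueness of the splitting level set in Lemma~\ref{exy}, which rules out flat spots in the relevant quantile.
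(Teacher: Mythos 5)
Your strategy is genuinely different from the paper's in two places, and one of them contains the only real soft spot. For uniqueness you argue directly, never touching $\gamma$: you play the two pivots $\tilde x_0\in P\cap L_x(y_0)$ and $\tilde x_1\in P\cap L_x(y_1)$ against each other, using nestedness of the superlevel sets of $\partial c(\cdot,y_0)/\partial y$ to get one strict inequality at $y_0$, the $P$-property at $\tilde x_0$ (sharpened via Lemma \ref{uniqueP}) to get the opposite strict inequality at $y_1$, the Intermediate Value Theorem to force a crossing $\overline y\in(y_0,y_1)$, and the $P$-property at $\tilde x_1$ at that crossing for the contradiction. This is correct, modulo endpoint care of the same kind the paper ignores (your claim $g_{\tilde x_1}(y_0)>0$ could fail only in the degenerate case where the endpoint $a$ also splits the mass at $\tilde x_1$), and your tacit full-support assumption on $\mu$ is likewise implicit in the paper, since the conclusion $(x,y)\in spt(\gamma)$ for every $x\in\overline X$ requires it. The paper proceeds in the opposite order: it first characterizes $spt(\gamma)$ and deduces uniqueness of the splitting value at the very end as a corollary.

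The exclusion of off-graph support points is where your argument, as written, has a gap. The paper needs only one intermediate value $\overline y\in(y,y')$: it produces a pivot $\tilde{\tilde x}$ with $(\tilde{\tilde x},\overline y)\in spt(\gamma)$ and gets an immediate contradiction from a three-term chain of inequalities among $\partial c(x,\overline y)/\partial y$, $\partial c(\tilde x,\overline y)/\partial y$ and $\partial c(\tilde{\tilde x},\overline y)/\partial y$, obtained from Lemma \ref{pos} (twice, with strictness supplied by Lemma \ref{uniqueP}) and the definition of $P$; no limit is taken. You instead let $y^*\to y_0$ and invoke continuity and single-valuedness of the threshold $\theta(y^*)=\partial c(p(y^*),y^*)/\partial y$, attributing this to the ``only if'' part of Lemma \ref{exy}. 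That lemma is stated without proof, and read as uniqueness of the splitting level set it does not follow from the theorem's hypotheses: if $\mu$ gives no mass to the region between two nearby level curves of $\partial c(\cdot,y^*)/\partial y$, the splitting threshold is a whole interval and $\theta$ admits no canonical continuous selection. The step is repairable without that claim: thresholds are bounded, so take subsequential limits $t^{\pm}$ of $\theta(y^*)$ from the two sides of $y_0$; by \textbf{(A2)} each level set is a $C^1$ curve, hence $\mu$-null, so $t^{\pm}$ are again splitting thresholds for $y_0$; your two one-sided inequalities give $t^{-}\le\partial c(x_0,y_0)/\partial y\le t^{+}$, and monotonicity of $t\mapsto\mu(\{\partial c(\cdot,y_0)/\partial y>t\})$ then shows $y_0$ splits the mass at $x_0$, the desired contradiction. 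With that repair (or by adopting the paper's one-shot intermediate-$\overline y$ argument) your proof is complete; the paper's version is shorter precisely because it never needs any limiting statement about thresholds.
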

\begin{proof}
 For each $x \in X$, by Lemma \ref{ex} we can choose $y \in Y$ that splits the mass at $x$; the hypothesis then implies the existence of $\tilde{x} \in P \cap L_x(y)$ satisfying the MCP.  Lemmas \ref{uniqueP} and \ref{divi} imply that $(\tilde{x},y) \in spt(\gamma)$.  

We now show that 
\begin{equation}\label{claim}
(x, y') \notin spt(\gamma) \text{ for all }y' \neq y.
\end{equation}
The proof is by contradiction; to this end, assume $(x,y') \in spt(\gamma)$ for some $y' \neq y$.  Suppose $y' > y$; choose $\overline{y} \in (y, y')$.  By Lemma \ref{exy}, we can choose $\overline{x}$ such that $\overline{y}$ splits the mass at $\overline{x}$.  Now use the hypothesis of the theorem again to find $\tilde{\tilde{x}} \in P \cap L_{\overline{x}}(\overline{y})$ satisfying the MCP and note that $(\tilde{\tilde{x}},\overline{y}) \in spt(\gamma)$.  By Lemma \ref{uniqueP}, $\tilde{x} \notin L_{\tilde{\tilde{x}}}(\overline{y})$, and so Lemma \ref{pos} implies $\frac{\partial c(\tilde{x},\overline{y})}{\partial y}  < \frac{\partial c(\tilde{\tilde{x}},\overline{y})}{\partial y}$.

Therefore, 
\begin{eqnarray*}
\frac{\partial c(x,\overline{y})}{\partial y}  &\leq& \frac{\partial c(\tilde{x},\overline{y})}{\partial y} \\
& < &\frac{\partial c(\tilde{\tilde{x}},\overline{y})}{\partial y}
\end{eqnarray*}
But now $(x, y'), (\tilde{\tilde{x}},\overline{y}) \in spt(\gamma)$ and $y' > \overline{y}$ contradicts Lemma \ref{pos}.  An analogous argument implies that we cannot have $(x,y') \in spt(\gamma)$ for $y' <y$, completing the proof of (\ref{claim}). 

Now, note that we must have $(x,\overline{y}) \in spt(\gamma)$ for \textit{some} $\overline{y} \in Y$ and so the preceding argument implies $(x,y) \in spt(\gamma)$.

Finally, we must show that there is no other $y' \in Y$ which splits the mass at $x$; this follows immediately, as if there were such a $y'$, an argument analogous to the preceding one would imply that $(x,y') \in spt(\gamma)$, contradicting (\ref{claim}).

\end{proof}
Note that we can use Theorem \ref{form} to derive a formula for the optimal map:
\begin{equation*}
 F(x) := \sup_y\Big\{y: \mu\Big(\{\overline{x} : \frac{\partial c(x,y)}{\partial y}  < \frac{\partial c(\overline{x},y)}{\partial y}\}\Big)>\nu( [0,y))\Big\}
\end{equation*}

\newtheorem{reg}[mono]{Corollary}
\begin{reg}
 Under the assumptions of the preceding theorem, the optimal map is continuous on $\overline{X}$.  
\end{reg}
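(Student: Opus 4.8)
The plan is to use the classical closed-graph criterion for continuity: a single-valued map from a space into a compact space, whose graph lies inside a closed set, is automatically continuous. Theorem \ref{form} supplies precisely the ingredients needed for this. It asserts that for \emph{every} $x \in \overline{X}$ — boundary points included — there is a well-defined value $F(x) \in \overline{Y}$ (the unique $y$ splitting the mass at $x$), that $(x,F(x)) \in spt(\gamma)$, and, crucially, that $F(x)$ is the \emph{only} point of $\overline{Y}$ with $(x,y) \in spt(\gamma)$. So the strategy is simply to feed these three facts, together with compactness of $\overline{Y}$ and closedness of $spt(\gamma)$, into the subsequence argument.

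Concretely, I would fix $x \in \overline{X}$ and a sequence $x_n \to x$ in $\overline{X}$, and show $F(x_n) \to F(x)$. Since $\overline{Y}$ is compact it suffices to prove that every convergent subsequence of $\big(F(x_n)\big)$ has limit $F(x)$. Passing to such a subsequence $x_{n_k}$ with $F(x_{n_k}) \to y^* \in \overline{Y}$, I would then invoke that $spt(\gamma)$ is closed (it is the support of a measure on the compact space $\overline{X} \times \overline{Y}$): from $(x_{n_k}, F(x_{n_k})) \in spt(\gamma)$ and $(x_{n_k}, F(x_{n_k})) \to (x, y^*)$ we get $(x, y^*) \in spt(\gamma)$. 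The uniqueness clause of Theorem \ref{form} then forces $y^* = F(x)$. Since this holds along every convergent subsequence, the full sequence $F(x_n)$ converges to $F(x)$, so $F$ is continuous at $x$, and as $x$ was arbitrary, $F$ is continuous on $\overline{X}$.

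I do not expect a genuine obstacle here: essentially all of the work has already been absorbed into Theorem \ref{form}, whose function was exactly to upgrade the merely $\gamma$-almost-everywhere defined optimal map into an everywhere-defined, pointwise-unique map whose graph sits inside $spt(\gamma)$, including over $\partial X$. The only point requiring a little care is that the uniqueness and well-definedness must genuinely hold at boundary points, since continuity is being claimed on the closure $\overline{X}$ rather than on $X$ — but this is precisely what Theorem \ref{form} provides, via the running hypothesis that every mass-splitting pair $(x,y)$ admits an $\tilde{x} \in P \cap L_x(y)$ satisfying the MCP. If one preferred a more hands-on route, one could instead argue directly from the explicit formula $F(x) = \sup\{y : \mu(\{\overline{x} : \partial_y c(x,y) < \partial_y c(\overline{x},y)\}) > \nu([0,y))\}$, using joint continuity of $(x,y) \mapsto \partial_y c(x,y)$, continuity of $y \mapsto \nu([0,y))$, and uniqueness of the mass-splitting level to rule out jumps in the sup; but the closed-graph argument above is shorter and avoids the estimates.
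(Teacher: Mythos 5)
Your argument is correct and is essentially the paper's own proof: both extract a convergent subsequence (the paper via $\limsup$/$\liminf$, you via compactness of $\overline{Y}$), use closedness of $spt(\gamma)$ to place the limit pair in the support, and then invoke the uniqueness clause of Theorem \ref{form} to identify the limit with $F(x)$. No gap; the only difference is cosmetic.
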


\begin{proof}
Choose $x_k \rightarrow x \in \overline{X}$ and set $y_k =F(x_k)$; we need to show $y_k \rightarrow F(x)$.  Set $\overline{y}=\limsup_{k \rightarrow \infty} y_k \in \overline{Y}$; by passing to a subsequence we can assume $y_k \rightarrow \overline{y}$.  As $spt(\gamma)$ is closed by definition, we must have $(x, \overline{y}) \in spt(\gamma)$ and so Theorem \ref{form} implies $\overline{y} =F(x)$.  A similar argument implies $\liminf_{k \rightarrow \infty} y_k =F(x)$, completing the proof.
\end{proof}

The following example illustrates the implications of the preceding Corollary. 

\newtheorem{ex}[mono]{Example}
\begin{ex}
 Let $X$ be the quarter disk:
\begin{equation*}
 X=\big\{(x_1,x_2) : x_1 > 0, x_2 > 0, x_1^2+x_2^2 < 1\big\}
\end{equation*}
Let $Y =(0, \frac{\pi}{2})$ and take $\mu$ and $\nu$ to be uniform measures on $X$ and $Y$, respectively, scaled so that both have total mass 1.  Let $c(x,y)=-x_1\cos(y)-x_2\sin(y)$; this is equivalent to the Euclidean distance between $x$ and the point on the unit circle parametrized by the polar angle $y$.  We claim that the optimal map takes the form $F(x)=\arctan(\frac{x_2}{x_1})$; that is, each point $x$ is mapped to the point $\frac{x}{|x|}$ on the unit circle.  Indeed, note that 
\begin{equation}\label{spt}
 c(x,y) \geq -\sqrt{x_1^2+x_2^2}
\end{equation}
with equality if and only if $y=F(x)$, and that uniform measure on the graph $(x,F(x))$ projects to $\mu$ and $\nu$, implying the desired result.  Now observe that $F$ is discontinuous at $(0,0)$; in fact, $((0,0), y)$ satisfies (\ref{spt}) for all $y \in Y$ so the optimal measure pairs the origin with every point.  Note that the conditions of Theorem \ref{form} fail in this case, as every $y \in Y$ splits the mass at $(0,0) \in \overline{X}$.  

Now suppose instead that $\nu$ is uniform measure on $[0, \frac{\pi}{4}]$, rescaled to have total mass $1$.  It is not hard to check that $(0,x_2)$ is in  $P$ and satisfies the MCP for all $x_2$.  Now, for all $(x,y) \in Y$ such that $y$ splits the mass at $x$, it is straightforward to verify that we have some $(0,x_2) \in L_x(y)$; hence, Corollary \ref{spt} implies continuity of the optimizer.
\end{ex}


\begin{thebibliography}{99}

\bibitem{bas} Basov, S. Hamiltonian approach to multi-dimensional screening.  \textit{J. Math. Econ.}, 36 (2001), 77-94.

\bibitem{bren} Brenier, Y. Decomposition polaire et rearrangement monotone des champs de vecteurs. \textit{C.R. Acad. Sci. Pair. Ser. I Math.}, \textbf{305} (1987), 805-808.

\bibitem{c1} Caffarelli, L.A. The regularity of mappings with a convex potential.  \textit{J. Amer. Math. Soc}, 5 (1992), 99-104.

\bibitem{c2} Caffarelli, L.A. Boundary regularity of maps with convex potentials.  \textit{Comm. Pure Appl. Math.,} 45 (1992), 1141-1151.

\bibitem{c3} Caffarelli, L.A. Boundary regularity of maps with convex potentials-II.  \textit{Ann. of Math. (2)}, 144 (1996), 453-496.

\bibitem{Caf} Caffarelli, L., Allocation maps with general cost funtions.  In \textit{Partial Differential Equations and Applications} (P. Marcellini, G. Talenti and E. Vesintini, eds.),  Lecture Notes in Pure and Applied Math. 177 (1996), pp. 29-35. 

\bibitem{d1} Delano\"e, P. Classical solvability in dimension two of the second boundary-value problem associated with the Monge-Ampere operator.  \textit{Ann. Inst. H. Poincare Anal. Non Lineaire} 8 (1991), pp. 442-457.

\bibitem{d2} Delano\"e, P.  Gradient rearrangement for diffeomorphisms of a compact manifold.  \textit{Differential Geom. Appl.} 20, (2004), pp. 145-165.

\bibitem{ds} Deneckere, R. and Severinov, S. Multi-dimensional screening with a one dimensional allocation space. Preprint.

\bibitem{fkm} Figalli, A., Kim, Y.-H. and McCann, R.J. Continuity and injectivity of optimal maps for non-negatively cross-curved costs. Preprint.

\bibitem{fkm2} Figalli, A., Kim, Y.-H. and McCann, R.J. Regularity of optimal transport maps on multiple products of sphere.  Preprint.

\bibitem{fr} Figalli, A. and Rifford, L. Continuity of optimal transport maps on small deformations of $\mathbb{S}^2$.  \textit{Comm. Pure Appl. Math.}  62, 12 (2009), pp. 1670-1706.

\bibitem{frv} Figalli, A., Rifford, L. and Villani, C. Necessary and sufficient conditions for continuity of optimal transport maps on Riemannian manfifolds.  Preprint.

\bibitem{frv2} Figalli, A., Rifford, L. and Villani, C. Nearly round spheres look convex.  Preprint.

\bibitem{g} Gangbo, W. \textit{Habilitation Thesis}, Universite de Metz, (1995).

\bibitem{gm} Gangbo, W. and McCann, R.J. The geometry of optimal transportation. \textit{Acta Math.}, 177 (1996), pp. 113-161.

\bibitem{k}  Kim, Y.-H. Counterexamples to continuity of optimal transportation on positively curved Riemannian manifolds.  \textit{Int. Math. Res. Not.} (2008) Vol. 2008 : article ID rnn120, 15 pages, doi:10.1093/imrn/rnn120.


\bibitem{km} Kim, Y-H. and McCann, R.J. Continuity, curvature and the general covariance of optimal transportation.  \textit{J. Eur. Math. Soc.}   12 (2010), pp. 1009-1040

\bibitem{km2} Kim, Y-H. and McCann, R.J. Towards the smoothness of optimal maps on Riemannian submersions and Riemannian products (of round spheres in particular).  To appear in \textit{J. Reine Angew. Math.}

\bibitem{lev} Levin, V. Abstract cyclical monotonicity and Monge solutions for the general Monge-Kantorovich problem.  \textit{Set-Valued Analysis}, 7, 1 (1999), pp. 7-32.

\bibitem{liu} Liu, J. H\"older regularity in optimal mappings in optimal transportation.  To appear in \textit{Calc. Var. Partial Differential Equations.}

\bibitem{loeper} Loeper, G. On the regularity of maps solutions of optimal transportation problems. \textit{Acta Math.} 202 (2009), 241-283.

\bibitem{loeper2} Loeper, G. Regularity of optimal maps on the sphere: The quadratic cost and the reflector antenna.  To appear in \textit{Arch. Rational Mech. Anal.}

\bibitem{lv} Loeper, G. and Villani, C. Regularity of optimal transport in curved geometry: the nonfocal case. \textit{To appear in Duke Math. J.} 

\bibitem{mtw} Ma, X-N., Trudinger, N., and Wang, X-J. Regularity of potential functions of the optimal transportation problem.  \textit{Arch. Rational Mech. Anal.},  177 (2005), 151-183.

\bibitem{m} McCann, R.J. Polar factorization of maps on Riemannian manifolds.  \textit{Geom. Funct. Anal.}  11 (2001), pp. 589-608.

\bibitem{sk} Smith, C. and Knott, M. On Hoeffding-Frechet bounds and cyclic monotone relations. \textit{J. Multivariate Anal.}, 40 (1992), 328-334.

\bibitem{tw1} Trudinger, N., and Wang, X-J. On the second boundary value problem for Monge-Ampere type equations and optimal transportation.  To appear in \textit{Ann. Sc. Norm. Super. Pisa Cl. Sci.}

\bibitem{tw2} Trudinger, N., and Wang, X-J. On strict convexity and $C^1$-regularity of potential functions in optimal transportation.  To appear in \textit{Arch. Rational Mech. Anal.}, 

\bibitem{u} Urbas, J., On the second boundary value problem for equations of Monge-Ampere type.  \textit{J. Reine Angew. Math.} 487 (1997), 115-124.

\bibitem{wang} Wang, X-J., On the design of a reflector antenna.  \textit{Inverse problems}
12, 3 (1996), 351-375.


\end{thebibliography}
\end{document}